\newcommand{\C}{\mathbb{C}}
\newcommand{\F}{\mathbb{F}}
\newcommand{\N}{\mathbb{N}}
\newcommand{\Q}{\mathbb{Q}}
\newcommand{\Z}{\mathbb{Z}}
\newcommand{\bbS}{\mathbb{S}}
\newcommand{\g}{\gamma}
\newcommand{\calc}{\mathcal C}
\newcommand{\calf}{\mathcal F}
\newcommand{\calm}{\mathcal M}
\newcommand{\calCl}{\mathcal{C}\ell^{\,0}}
\newcommand{\pr}{\mathfrak p}
\newcommand{\gotq}{\mathfrak q}
\newcommand{\sri}{\twoheadrightarrow}
\newcommand{\iri}{\hookrightarrow}
\newcommand{\ov}{\overline}
\newcommand{\wt}{\widetilde}
\newcommand{\plim}[1]{\displaystyle{\lim_{\stackrel{\longleftarrow}{#1}}}\,}
\DeclareMathOperator{\Gal}{Gal} \DeclareMathOperator{\Hom}{Hom}
\DeclareMathOperator{\Fitt}{Fitt} 
\DeclareMathOperator{\Fr}{Fr}
\theoremstyle{plain}
\newtheorem{thm}{Theorem}[section]
\newtheorem{cor}[thm]{Corollary}
\newtheorem{lem}[thm]{Lemma}
\theoremstyle{definition}
\newtheorem{defin}[thm]{Definition}
\newtheorem{rem}[thm]{Remark}
\title[Fitting ideals of Carlitz-Hayes cyclotomic type extensions]
{Fitting ideals of class groups in Carlitz-Hayes
cyclotomic extensions}
\author[A. Bandini] {Andrea Bandini}
\address{Andrea Bandini: Dipartimento di Matematica, Universit\`a di Pisa\\
 Largo Bruno Pontecorvo 5, 56127 Pisa, Italy}
\email{andrea.bandini@unipi.it}
\author[F. Bars] {Francesc Bars}
\address{Francesc Bars: Departament de Matem\`atiques, Facultat de Ciencies, Universitat Aut\`onoma de Barcelona\\
08193 Bellaterra (Barcelona), Catalonia}
\email{francesc@mat.uab.cat}
\thanks{F. Bars supported by MTM2016-75980-P and MDM-2014-0445.}
\author[E. Coscelli]{Edoardo Coscelli}
\address{E.Coscelli: Dipartimento di Matematica F. Enriques, Universit\`a degli Studi di Milano\\
 Via Cesare Saldini, 50, 20133 Milano (MI), Italy} \email{edoardo.coscelli@unimi.it}
\begin{document}

\maketitle

\begin{flushright}
    \textsc{Dedicated to the memory of David Goss}
  \end{flushright}

\begin{abstract} We generalize some results of Greither and Popescu
to a geometric Galois cover
$X\rightarrow Y$ which appears naturally for example in extensions generated by
$\mathfrak{p}^n$-torsion points of a rank 1 normalized
Drinfeld module (i.e. in subextensions of Carlitz-Hayes
cyclotomic extensions of global fields of positive characteristic).
We obtain a description of the Fitting ideal of class groups (or of their dual) via a 
formula involving Stickelberger elements and providing a link (similar to the one in \cite{ABBL}) 
with Goss $\zeta$-function.

\end{abstract}

{\bf Keywords:} {Stickelberger series; $L$-functions; class groups;
function fields; Carlitz-Hayes cyclotomic extensions.}

\subjclass{{\bf MSC 2010:} 11R60; 11R23; 11M38; 11R58}

\maketitle

\section{Introduction}\label{SecIntro}
One of the main topics of modern number theory is the investigation of arithmetic
properties of motives over a global field (in any characteristic) and their relation with (or interpretation as) special values of
$\zeta$-functions or $L$-functions. Iwasawa theory offers an effective way of dealing with various issues arising in this context: on one side it 
deals with the variation of algebraic structures in $p$-adic towers (e.g. class groups) and, on the other side, it provides a good understanding 
of special values via interpolation with $p$-adic $L$-functions (whose construction is one of the major outcome of the theory).
The algebraic counterpart of $p$-adic $L$-functions is usually represented by (generators of) characteristic ideals or Fitting ideals
and the link between the algebraic and analityc side of the theory is the subject of various instances of Iwasawa Main Conjecture
(IMC).

Let $F$ be a function field of transcendence degree 1 over a finite field $\F$ of cardinality $q=p^r$, i.e. the function field of a smooth projective 
curve $Y$ defined over $\F$. For the particular case of $F=\F(t)$ and the cyclotomic $\Z_p^\infty$-extension $\mathcal{F}_\pr/F$ 
generated by the $\pr^n$-powers torsion of the Carlitz module ($\pr$ any prime of $\F[t]$), the IMC is proved in \cite{ABBL}: the authors
use some results of Greither and Popescu on cohomological triviality of $p$-adic motives and on Fitting ideals (see \cite{GP1} and \cite{GP2}) 
to compute Fitting ideals of class groups of the finite subextensions of $\mathcal{F}_\pr$, and then check all compatibility conditions needed
to apply a limit process (similar results for the same type of $\Z_p^\infty$-extensions but for more general function fields $F$ can be 
found in the third author PhD thesis \cite{CPhD}). The main goal of this paper is to describe a more general setting: we consider Galois
extensions (or Galois coverings between curves) with at least one totally ramified prime (or {\em almost} totally ramified,
see Section \ref{SecAlmostTotRam}) and provide a general formula for the Fitting ideal of (the Pontrjagin dual of) their class groups applying 
again the result of \cite{GP1} and \cite{GP2}. The main application we have in mind is the following: let $F$ be a global function field as above
and consider its extension generated by the $\pr^n$- torsion of a sign normalized rank 1 Drinfeld module (or a {\em Hayes module}) $\Phi$.
The extension $F(\Phi[\pr])/F$ contains the Hilbert class field $H_A$ of $F$ ($A$ is the ring of integers of $F$ with respect to a fixed prime 
at $\infty$) and its Galois group acts on all class groups of the extensions $F(\Phi[\pr^n])$. Therefore we can consider $\chi$-parts of 
those class groups for any character $\chi$ of $Gal(F(\Phi[\pr])/F)$ and the computation of their Fitting ideals should lead to special values
(and sometimes trivial zeros) of a $\pr$-adic $L$-function or of Goss $\zeta$-function for $F$. Unfortunately our computations strongly depend on the
behavior of the character $\chi$ (i.e. on $Ker(\chi)$) and in some cases it is only possible to compute Fitting ideals of duals of class groups:
nevertheless the relations/formulas proved in Theorems \ref{teo3.2} and \ref{teo3.3} (and in their corollaries) provide the algebraic side
of the IMC for this setting (see also \cite{CPhD}) and shed some light on the phenomenon of double zeroes for type 3 characters (see Section
\ref{SecFittIdCycExt}) which did not appear in the basic setting $F=\F(t)$ of \cite{ABBL}.    

\subsection{Brief summary} We start by considering the following general setting: let $X\rightarrow Y$ be a finite abelian Galois covering 
of smooth projective curves over a finite field $\F$ as above, with Galois group $G$ and unramified outside a finite set of places $S$. 
Equivalently we can consider a finite abelian Galois extensions $K/F$ (where $K$ and $F$ are the function fields $\F(X)$ and $\F(Y)$ 
respectively) with Galois group $G$. We always assume that $\F$ is the field of constants of $X$, i.e. the covering $X\rightarrow Y$ is geometric. 
We generalize some of the results of Greither and Popescu in \cite[Sections 2 and 3]{GP2} (with $\ell=p$) on the Fitting ideal of the 
Tate module of the Picard 1-motive of $X$ to the case in which there is a totally ramified place $v\in S$ (not necessarily $\F$-rational). 
We use this to compute Fitting ideals of $\Z_p$-duals of Tate modules which are easily linked  to Pontrjagin duals of class groups of 
$\F(X)$ (see Theorems \ref{teoap2} and \ref{teoap4}). Then, in Section \ref{SecAlmostTotRam}, we obtain similar results for a 
mixed cover, i.e. $X\rightarrow Y'\rightarrow Y$ where $X\rightarrow Y'$ is of $p$-power degree and totally ramified at some prime 
and $Y'\rightarrow Y$ is of order prime to $p$.

In Section \ref{SecFittIdCycExt} we specialize to the cyclotomic extension generated by the $\pr^n$-powers torsion of an Hayes module, where
$F_n:=F(\Phi[\pr^n])$ plays the role of $\F(X)$ (for some $n\geqslant 2$) and $F_1:=F(\Phi[\pr])$ plays the role of $\F(Y')$ and we assume that
the class group of $F=\F(Y)$ has order prime to $p$.  The main outcome is summarized in Theorem \ref{teo3.3} and Corollary \ref{CorDualClGr}
where we provide formulas for Fitting ideals for class groups and duals of class groups involving Stickelberger elements (which arise
from the main theorems of Greither and Popescu) and some correction factors like $\frac{1}{1-\gamma^{-1}}$ (where $\gamma$
is a topological generator of $Gal(\overline{\F}/\F)\,$) which emphasize the presence of trivial zeroes (sometimes of order 2). 
For example let $W$ be a ring extension of $\Z_p$ containing all values of characters defined over $Gal(F_1/F)$, let $\Theta_{F_n/F,S,\chi}$ 
be the Stickelberger element for the extension $F_n/F$ (more details are in Section \ref{SecFittId}, here $S:=\{\pr,\infty\}$), put $d_\pr:=\deg(\pr)$ and 
$n(Gal(F_n/F_1)):=\displaystyle{\sum_{\sigma\in Gal(F_n/F_1)} \sigma}$. If $\chi$ is trivial on $Gal(F_1/H_A)$ (i.e. it is 
{\em of type 3} in the terminology od Definition \ref{DefCharType}) and on the Frobenius at $\pr$ but it is not the trivial character $\chi_0$, then
\begin{equation}\label{EqIntro1} \Fitt_{W[Gal(F_n/F_1)]} (\calCl(F_n)(\chi)^{\vee})=
\frac{\Theta_{F_n/F,S,\chi}}{(1-\gamma^{-1})^2}_{|\gamma=1}\cdot\left(\frac{n(Gal(F_n/F_1))}{d_{\pr}}\right) .
\end{equation}

In the final Section \ref{SecGoss} we point out some possible applications of our results to the study of special values of Goss
$\zeta$-function.

\subsection{Basic notations}  
Let $F$ be a global function field of positive characteristic $p$, i.e. a finite extension of a field of transcendence degree 1 over a finite
field $\F:=\F_q$, which we call the constant field of $F$. We fix a place $\infty$ of $F$ and let $A$ be the ring of integers at $\infty$,
i.e. the elements of $F$
which are regular outside $\infty$. We denote by $H_A$ the Hilbert class field of $A$, i.e. the maximal abelian extension of $F$ 
which is unramified at every prime of $A$ and totally split at $\infty$. Let $h^0(F)$ be the class number of $F$ so that $[H_A:F]=h^0(F)$.
We assume that $p\nmid h^0(F)$, moreover we assume also that the degree of $\infty$ is $\deg(\infty):=d_\infty=1$ so that the 
constant field of $H_A$ is still $\F$.

\begin{subsection}*{Acknowledgements} We (as probably most of the people actually working on function fields) are deeply indebted
to the efforts made by David Goss throughout his career to put function fields on the (so to speak) matematical map.
He never missed an opportunity to encourage young mathematicians to explore this beautiful and still quite mysterious world and
his legacy goes much beyond his remarkable work.\\
The second author wishes to recall in particular the enthusiasm and energy that
David was able to pass on. When he met him for the first time in
Barcelona during 2010, he was totally fascinated by his energy,
his positivity and his dedication to the area of function fields arithmetic.\\
Thank you David.
\end{subsection}

\begin{section}{Fitting ideals: totally ramified and almost totally ramified cases}\label{SecFittId}
We start by fixing a few notations for the main objects we will be interested in and by recalling the general setting
already mentioned in the introduction. Let $X\rightarrow Y$ be a finite abelian Galois covering of smooth projective curves over a finite field 
$\F$ (of order $q=p^r$) with Galois group $G$ and unramified outside a finite set of places $S$ (i.e. in terms of function fields,
$G=Gal(\F(X)/\F(Y))$\,). We assume that $X\rightarrow Y$ is geometric, i.e. $\F$ is the field of constants of $X$, and that $S\neq\emptyset$.

\noindent Let $\overline{S}:=\overline{X}(S)$ be the set of closed points of the base change $\overline{X}:=X\times_\F \overline{\F}$
(where $\overline{\F}$ is a fixed algebraic closure of $\F$) lying above points of $S$, and let $\Sigma$ be a set of primes of $F:=\F(Y)$ disjoint from $S$.
Let $T_p(\calm_{\ov{S},\ov{\Sigma}})$ be the $p$-adic realization (or $p$-adic Tate module) associated with the Picard 1-motive 
$M_{\ov{S},\ov{\Sigma}}$: a detailed description of $\calm_{\ov{S},\ov{\Sigma}}$ in terms of divisor classes quite useful for 
computations is provided in \cite[Section 2]{GP1}. For any set $\Sigma$ there is a short exact sequence (see \cite[Equation (2)]{GP2})
\[ 0\rightarrow T_p(\tau_{\Sigma}(\ov{\F}))\rightarrow T_p(\calm_{\ov{S},\ov{\Sigma}})\rightarrow
T_p(\calm_{\ov{S}})\rightarrow 0 \ ,\] 
and, since we will only consider the $p=char(F)$ case, the toric part $T_p(\tau_{\Sigma}(\ov{\F}))$ vanishes (see \cite[Remark 2.7]{GP1}). Hence
$\Sigma$ has no concrete influence on the module $T_p(\calm_{\ov{S},\ov{\Sigma}})\simeq T_p(\calm_{\ov{S}})$ we are interested
in and we can (and will) assume $\Sigma=\emptyset$.

\begin{defin}\label{DefStick}
Let $K:=\F(X)$ and define the {\em Stickelberger series} associated with
$K/F$ (or $X\rightarrow Y$) and $S$ as
\[ \Theta_{K/F,S}(u):= \prod_{\gotq\not\in S} (1-\Fr_\gotq^{-1} u^{\deg(\gotq)})^{-1} \in \Z[G][[u]] ,\]
where the product is taken over places $\gotq$ of $F$ and $\Fr_\gotq\in G$ denotes the Frobenius at $\gotq$.
\end{defin}

One can actually define a kind of {\em universal} Stickelberger series in $\Z[G_S][[u]]$, where $G_S$ is the Galois group of the maximal
abelian extension of $F$ unramified outside $S$, and find back $\Theta_{K/F,S}(u)$ as the natural projection of that series,
(see \cite[Section 3.1]{ABBL}).

In this setting the main result of \cite{GP1} (i.e. \cite[Theorem 4.3]{GP1}, see also \cite[Lemma 2.3]{GP2}) reads as follows

\begin{thm}[Greither-Popescu]\label{teoap1} The Tate module $T_p(\calm_{\ov{S}})$ is cohomologically trivial
over $G$ and free of finite rank over $\Z_p$, hence projective over $\Z_p[G]$. Moreover the Fitting ideal of
$T_p(\calm_{\ov{S}})$ over $\Z_p[G][[G_\F]]$ is principal and generated by $\Theta_{K/F,S}(\g^{-1})$, i.e.
\begin{equation}\label{EgGP1}
\Fitt_{\Z_p[G][[G_\F]]} (T_p(\calm_{\ov{S}}))=(\Theta_{K/F,S}(\g^{-1}))
\end{equation}
(where $\g$ is the arithmetic Frobenius in $G_\F:=\Gal(\ov{\F}/\F)$\,).
\end{thm}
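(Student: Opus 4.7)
My starting point is the explicit divisorial description of $\calm_{\ov{S}}$ recalled in \cite[Section 2]{GP1}: in our characteristic $p$ setting, where the toric part vanishes, the Picard $1$-motive is captured by a two-term complex built from degree-zero divisor classes on $\ov{X}$ supported away from $\ov{S}$. Its $p$-adic Tate module $T_p(\calm_{\ov{S}})$ can then be realized as an inverse limit of $p^n$-torsion in these class groups, equipped with compatible actions of $G$ and of $G_\F$. From this realization it is immediate that $T_p(\calm_{\ov{S}})$ is a finitely generated $\Z_p$-module, and that $\g\in G_\F$ acts $\Z_p[G]$-linearly.

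The plan for cohomological triviality over $G$ is to reduce to statements about the Jacobian of $\ov{X}\setminus\ov{S}$. Over $\ov{\F}$ this Jacobian is a divisible group, so its $p$-adic Tate module is free over $\Z_p$, and $T_p(\calm_{\ov{S}})$ sits in a short exact sequence involving this free module and a lattice spanned by divisors supported at $\ov{S}$. Freeness over $\Z_p$ follows at once. For the vanishing of $\widehat{H}^i(H,T_p(\calm_{\ov{S}}))$ for every subgroup $H\leq G$, I would analyze the ramification of $X\ri Y$ at places in $S$ and use that the divisors in the lattice part can be chosen compatibly under the $G$-action; the subtle point is that passing to $\ov{X}$ and then to $p$-adic Tate modules kills the $p$-power torsion obstructions that would otherwise appear in the finite class groups. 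Combined with freeness over $\Z_p$, Serre's criterion then upgrades cohomological triviality to projectivity over $\Z_p[G]$.

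For the Fitting-ideal computation, I would exploit that a projective $\Z_p[G]$-module equipped with a commuting $\Z_p[G]$-linear endomorphism $\g$ admits a Fitting ideal over $\Z_p[G][[G_\F]]$ equal to the characteristic series of $\g^{-1}$ acting on it. The heart of the matter is then to identify this characteristic series with the Euler product defining $\Theta_{K/F,S}(u)$. This is a Lefschetz/Weil-type statement: the action of geometric Frobenius on $T_p$ of the Jacobian of $\ov{X}\setminus\ov{S}$ is controlled by the local factors $(1-\Fr_\gotq^{-1} u^{\deg(\gotq)})^{-1}$ for $\gotq\not\in S$, which reassemble exactly into $\Theta_{K/F,S}(u)$. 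Specializing $u=\g^{-1}$ yields \eqref{EgGP1}.

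The hard part will be the cohomological triviality in the case $p\mid |G|$, a situation which is systematically avoided in the classical $\ell\neq p$ Iwasawa-theoretic arguments. One must genuinely use the geometric input that the connected components of the Jacobians are divisible over $\ov{\F}$, so that the $p$-adic Tate module is ``big enough'' to absorb the $p$-torsion pathologies of the finite-level class groups. This is where the arguments of \cite{GP1} are most delicate and cannot be imported verbatim from the number-field literature; it is also the step that will need the most care when, in Section \ref{SecAlmostTotRam}, we extend the picture to totally (or almost totally) ramified situations.
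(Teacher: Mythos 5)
The paper offers no proof of this statement at all: it is imported verbatim as the main theorem of Greither and Popescu (\cite[Theorem 4.3]{GP1}, see also \cite[Lemma 2.3]{GP2}), so the expected ``proof'' is the citation. Your sketch instead tries to reprove it, and the two places where all of the difficulty of \cite{GP1} sits are exactly the ones you leave as prose. For cohomological triviality, Serre's criterion (c.t.\ together with $\Z_p$-freeness gives projectivity over $\Z_p[G]$) is the easy step; the hard step is showing $\widehat H^i(H,T_p(\calm_{\ov S}))=0$ for subgroups $H$ with $p\mid |H|$, and ``analyze the ramification \dots\ the Tate module kills the $p$-power torsion obstructions'' is not an argument, it is a restatement of the problem. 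Nothing in your outline replaces the genuinely delicate $\ell=p$ analysis of Greither--Popescu, and you acknowledge as much in your last paragraph without supplying it.

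More seriously, the ``Lefschetz/Weil-type statement'' you invoke is false as stated in characteristic $p$. Since $p=\mathrm{char}(F)$, the module $T_p(Jac(X)(\ov\F))$ is $\Z_p$-free of rank equal to the $p$-rank (Hasse--Witt invariant) of the Jacobian, in general strictly smaller than $2g$; hence the characteristic series of Frobenius acting on $T_p(\calm_{\ov S})$ cannot literally reassemble into the Euler product $\Theta_{K/F,S}(u)$ --- it only sees the unit-root (slope-zero) part of the $L$-function. The equality \eqref{EgGP1} nevertheless holds because, after substituting $u=\g^{-1}$, the missing positive-slope factors are of the form $1-c\,\g^{-k}$ with $c\in p\Z_p$ and are therefore units in $\Z_p[G][[G_\F]]$ ($p$ being topologically nilpotent there); the same circle of ideas is what makes the evaluation $\Theta_{K/F,S}(\g^{-1})$ converge in the completed group ring in the first place, which your sketch also takes for granted. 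Your reduction of the Fitting ideal of a $\Z_p[G]$-projective module to a characteristic series is fine in spirit (it is essentially \cite[Proposition 4.2 ff.]{GP1}), but the final identification step as you state it is the $\ell\neq p$ picture transplanted to $\ell=p$, and without the unit-factor and convergence arguments (or simply the citation) it does not go through.
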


\subsection{The totally ramified case}
In this subsection we assume that the cover $X\rightarrow Y$ is {\em totally ramified} at some place $v_1\in S$.

We recall the short exact sequence of
$\Z_p[G][[G_{\F}]]$-modules 
\begin{equation}\label{eqap0}
0\rightarrow T_p(Jac(X)(\overline{\F}))\rightarrow T_p(\calm_{\ov{S}})\rightarrow L\rightarrow 0 ,
\end{equation}
(see \cite[after Definition 2.6]{GP1}) where $Jac(X)(\ov{\F})$ is the set of the $\ov{\F}$-points of the Jacobian of $X$, $T_p(Jac(X)(\ov{\F}))$ 
is the usual Tate module at $p$ and $L$ is the kernel of the degree map $\Z_p[\ov{X}(S)]\rightarrow \Z_p$. 
Since our first goal is to compute the Fitting ideal of $T_p(Jac(X)(\overline{\F}))$ and Theorem \ref{teoap1} takes care of the central 
element in the sequence, we focus now on the $\Z_p[G][[G_\F]]$-module $L$.  

\noindent For a place $v\in Y$ denote by $G_v$ the decomposition subgroup of $v$ in $G$ and by $I_v$ its
inertia subgroup. Put $H_v:=\Z_p[\ov{X}(v)]$ (where $\ov{X}(v)$ denotes the set of points of $X\times_{\F}\overline{\F}$ above $v$):
it is a $\Z_p[G][[G_\F]]$-module and we observe that $\Z_p[\ov{X}(S)]=\oplus_{v\in S} H_v$. Let $\Fr_v$ denote the Frobenius
of $v$ in $G$ (if $v$ is unramified, in the ramified case any lift of a Frobenius of $v$ in $G_v/I_v$ will do) and put
$e_v(u):=1-\Fr_v^{-1}u^{d_v}\in\Z[G][u]$ (where $d_v$ is the degree of the place $v$).

\noindent In this setting \cite[Lemmas 2.1 and 2.2]{GP2} read as

\begin{lem} \label{LemGP}
$H_v$ is a cyclic $\Z_p[G][[G_\F]]$-module and we have:
\begin{enumerate}
\item[(i)] if $v\in Y$ is unramified in $X$, then
\[ \Fitt_{\Z_p[G][[G_\F]]}(H_v)=(e_v(\g^{-1}))\quad {\rm and}\quad H_v\simeq \Z_p[G][[G_\F]]/(e_v(\g^{-1}))\ ;\]
\item[(ii)] if $v\in Y$ is ramified in $X$, then
$\Fitt_{\Z_p[G][[G_\F]]}(H_v)=(e_v(\g^{-1}),\tau-1\,:\,\tau\in I_v)$
and
\[ H_v\simeq \Z_p[G][[G_\F]]/(e_v(\g^{-1}),\tau-1\,:\,\tau\in I_v)\simeq \Z_p[G/I_v][[G_\F]]/(e_v(\g^{-1})) \ .\]
\end{enumerate}
\end{lem}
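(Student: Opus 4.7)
The plan is to identify $H_v$ explicitly as a cyclic quotient of $R:=\Z_p[G][[G_\F]]$ by fixing a single closed point $w\in\overline{X}(v)$ and computing its stabilizer under the joint action of $G$ and $G_\F$.

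Cyclicity: pick $w\in\overline{X}(v)$ lying above a geometric point $v'\in\overline{Y}(v)$. Since $G_\F$ acts transitively on the $d_v$ points of $\overline{Y}(v)$, the orbit $\langle\gamma\rangle\cdot w$ already meets every fiber of $\overline{X}\to\overline{Y}$ sitting above $v$. Each such fiber $\overline{X}(v'')$ has cardinality $|G|/|I_v|$ and is a single $G$-orbit, with the stabilizer of any geometric point being exactly the inertia $I_v$ (inertia acts trivially on residue fields, and here those residue fields are $\overline{\F}$). Combining, the full orbit $G\cdot\langle\gamma\rangle\cdot w$ exhausts $\overline{X}(v)$, so the $R$-linear map $\varphi:R\to H_v$, $1\mapsto w$, is surjective.

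Computation of $\ker\varphi$: an element $\sigma\gamma^k$ fixes $w$ only if $\gamma^k v'=v'$ (since $\sigma$ acts trivially on $\overline{Y}$), which forces $d_v\mid k$; writing $k=d_v m$ and using that $\gamma^{d_v}$ acts on $\overline{X}(v')$ as a fixed lift of $\Fr_v\in G_v/I_v$ to $G_v$, one finds $\sigma\gamma^{d_v m}w=w$ iff $\sigma\Fr_v^{m}\in I_v$. In case (i), with $I_v=1$, the stabilizer of $w$ in $G\times\langle\gamma\rangle$ is topologically generated by the single relation $(\Fr_v^{-1},\gamma^{d_v})$, so $\ker\varphi$ is generated, up to a unit coming from $\gamma^{d_v}$ and $\Fr_v$, by $1-\Fr_v^{-1}\gamma^{-d_v}=e_v(\gamma^{-1})$. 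In case (ii) the inertia $I_v\times\{1\}$ additionally fixes $w$, contributing the relations $\tau-1$ for $\tau\in I_v$, so $\ker\varphi=(e_v(\gamma^{-1}),\tau-1:\tau\in I_v)$.

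Given the isomorphism $H_v\cong R/\ker\varphi$, both claims follow at once: the first Fitting ideal of a cyclic module $R/J$ equals $J$, producing the stated generators. The second isomorphism in (ii) results from the canonical identification $R/(\tau-1:\tau\in I_v)\cong\Z_p[G/I_v][[G_\F]]$. The delicate step will be matching the Frobenius/inverse conventions so that the stabilizer relation reproduces exactly $e_v(\gamma^{-1})$ up to a genuine unit, and checking in (ii) that the ambiguity in choosing a lift $\Fr_v\in G_v$ is correctly absorbed by the inertia relations already present in the ideal.
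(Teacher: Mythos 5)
Your overall route --- identify $H_v$ with $\Z_p[G][[G_\F]]$ modulo the relation ideal of the stabilizer of one chosen point $w\in\overline{X}(v)$, and then read off the Fitting ideal of a cyclic module --- is the right one. The paper gives no argument for this lemma (it is quoted from \cite{GP2}, Lemmas 2.1 and 2.2), and your orbit--stabilizer proof is the natural direct way to establish it: the transitivity/cyclicity argument, the identification $\Z_p[G][[G_\F]]/(\tau-1:\tau\in I_v)\simeq\Z_p[G/I_v][[G_\F]]$, and the fact that $\Fitt_R(R/J)=J$ (the zeroth Fitting ideal of a cyclic module) are all fine as you state them.

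The one genuine gap is precisely the step you defer at the end, and as written your bookkeeping is on the wrong side, so it would not produce the stated generator. With $\gamma$ the arithmetic Frobenius of $G_\F$ and $\Fr_v\in G_v$ a lift of the arithmetic Frobenius at $v$, the stabilizer of $w$ in $G\times G_\F$ is topologically generated by $I_v\times\{1\}$ and $(\Fr_v,\gamma^{d_v})$, i.e.\ $\gamma^{d_v}w=\Fr_v^{-1}w$: on the fibre, $\gamma^{d_v}$ acts as a lift of the \emph{inverse} Frobenius, not of $\Fr_v$, and $(\sigma,\gamma^{d_v m})$ fixes $w$ iff $\sigma\Fr_v^{-m}\in I_v$ (not $\sigma\Fr_v^{m}\in I_v$). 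This is not a cosmetic point: your claimed generator $(\Fr_v^{-1},\gamma^{d_v})$ would give the ideal $(1-\Fr_v\gamma^{-d_v},\ \tau-1:\tau\in I_v)$, and $1-\Fr_v\gamma^{-d_v}$ differs from $e_v(\gamma^{-1})=1-\Fr_v^{-1}\gamma^{-d_v}$ by more than a unit unless $\Fr_v^2\in I_v$; so the discrepancy can be absorbed neither ``up to a unit coming from $\gamma^{d_v}$ and $\Fr_v$'' nor, in case (ii), by the inertia relations. The quickest way to fix the sign is the residue-field description of the fibre: points of $\overline{X}$ over a closed point $x$ of $X$ correspond to $\F$-embeddings $\iota:k(x)\hookrightarrow\overline{\F}$, with $\gamma$ acting by $\iota\mapsto\gamma\circ\iota$ and $\sigma\in G_v$ by $\iota\mapsto\iota\circ\overline{\sigma}^{-1}$; then $(\sigma,\gamma^{d_v m})$ fixes $(x,\iota)$ iff $\gamma^{d_v m}\circ\iota=\iota\circ\overline{\sigma}$, i.e.\ iff $\overline{\sigma}$ is the $q^{d_v m}$-power map on $k(x)$, i.e.\ $\sigma\equiv\Fr_v^{m}\bmod I_v$. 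With this correction the stabilizer relation becomes $\Fr_v\gamma^{d_v}-1=-\Fr_v\gamma^{d_v}\bigl(1-\Fr_v^{-1}\gamma^{-d_v}\bigr)$, a genuine unit times $e_v(\gamma^{-1})$, and the rest of your argument goes through verbatim (noting, as you implicitly do, that the kernel of $\Z_p[G][[G_\F]]\to H_v$ is the closed ideal generated by $d-1$ with $d$ running over topological generators of the stabilizer).
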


\begin{lem}\label{lemA3} Let $X\rightarrow Y$ be a geometric Galois cover with a totally ramified prime $v_1\in S$. Then
 we have an isomorphism of $\Z_p[G][[G_{{\F}}]]$-modules:
\begin{equation}\label{eqap1}
L\simeq (\g-1)H_{v_1}\oplus\left(\bigoplus_{v\in S'} H_v\right)
\end{equation}
where $S':=S-\{v_1\}$.
\end{lem}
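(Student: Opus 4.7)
The plan is to realize $L$ as the middle term of a short exact sequence whose outer modules are exactly $(\g-1)H_{v_1}$ and $\bigoplus_{v\in S'} H_v$, and then to construct a $\Z_p[G][[G_\F]]$-linear splitting. First, using the natural decomposition $\bigoplus_{v\in S} H_v = H_{v_1}\oplus\bigoplus_{v\in S'} H_v$, I would invoke Lemma~\ref{LemGP}(ii): since $v_1$ is totally ramified, $I_{v_1}=G$, so $H_{v_1}\simeq\Z_p[[G_\F]]/(\g^{d_{v_1}}-1)$ with trivial $G$-action. A geometric point $w_1\in\ov{X}(v_1)$ satisfies $\deg(w_1)=1$, so the restriction of $\deg$ to $H_{v_1}$ is surjective onto $\Z_p$ with kernel precisely the augmentation ideal $(\g-1)H_{v_1}$.

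Then by the snake lemma applied to the inclusion $H_{v_1}\hookrightarrow \bigoplus_{v\in S} H_v$ (with $\deg$ on both sides and the identity on $\Z_p$), I obtain a short exact sequence
\[
0 \longrightarrow (\g-1)H_{v_1} \longrightarrow L \longrightarrow \bigoplus_{v\in S'} H_v \longrightarrow 0
\]
of $\Z_p[G][[G_\F]]$-modules, the right map being projection away from the $H_{v_1}$-component. To split it I would send $(x_v)_{v\in S'}$ to the element $\bigl((x_v)_{v\in S'},\,-s\,\wt w_1\bigr)\in L$, where $s:=\sum_{v\in S'}\deg(x_v)$ and $\wt w_1\in H_{v_1}$ is a fixed element of degree $1$. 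Here $G$-equivariance is automatic, since $G$ acts trivially on $H_{v_1}$ and $\deg$ is $G$-invariant.

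The hard part will be ensuring $G_\F$-equivariance of this section, which forces $\wt w_1$ to be $G_\F$-invariant. The $G_\F$-invariants of $H_{v_1}$ form the cyclic $\Z_p$-submodule generated by the norm element $N:=\sum_{i=0}^{d_{v_1}-1}\g^i w_1$, whose degree is $d_{v_1}$; thus $\wt w_1:=N/d_{v_1}$ provides such a lift whenever $d_{v_1}$ is a unit in $\Z_p$, while otherwise one must produce the splitting more delicately, exploiting the explicit cyclic presentations of the $H_v$ ($v\in S'$) from Lemma~\ref{LemGP} to build an equivariant correction term by hand. The proof would conclude by checking that the resulting map $(\g-1)H_{v_1}\oplus\bigoplus_{v\in S'} H_v\to L$ is an isomorphism, most easily by comparing $\Z_p$-ranks (both equal $|\ov{X}(S)|-1$) and verifying injectivity on each summand.
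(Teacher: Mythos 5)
Your strategy is the same as the paper's. The paper's proof consists precisely of the map $(\alpha,\beta)\mapsto(\alpha-\deg(\beta)\,{\bf 1}_{H_{v_1}},\beta)$ from $(\g-1)H_{v_1}\oplus\bigl(\bigoplus_{v\in S'}H_v\bigr)$ into $\bigoplus_{v\in S}H_v$, together with the observation that it is injective with image $L$; this is exactly your splitting with $\wt w_1={\bf 1}_{H_{v_1}}$, and the paper does not discuss $G_\F$-equivariance at all (note that ${\bf 1}_{H_{v_1}}$ is not $\g$-invariant once $d_{v_1}>1$, so that map, as written, is only $\Z_p[G]$-linear). Your identification of $G_\F$-equivariance as the real issue is therefore correct, and your corrected section $\beta\mapsto\bigl(-\deg(\beta)N/d_{v_1},\beta\bigr)$, with $N$ the norm element of $H_{v_1}\simeq\Z_p[[G_\F]]/(1-\g^{-d_{v_1}})$, gives a complete proof whenever $p\nmid d_{v_1}$; on that range your write-up is, if anything, more careful than the one in the paper.

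The genuine gap is the case $p\mid d_{v_1}$, which you defer to an unspecified ``equivariant correction term built by hand'': no such correction exists in general. A $\Z_p[G][[G_\F]]$-linear section of $L\sri\bigoplus_{v\in S'}H_v$ must send each cyclic generator ${\bf 1}_{H_v}$ to an element $(\alpha_v,{\bf 1}_{H_v})\in L$ with $\deg\alpha_v=-1$ and $\alpha_v$ killed by the annihilator $(e_v(\g^{-1}),\tau-1:\tau\in I_v)$ of ${\bf 1}_{H_v}$; since $G$ acts trivially on $H_{v_1}$, this forces $\g^{d_v}\alpha_v=\alpha_v$, and the $\g^{d_v}$-invariants of $H_{v_1}\simeq\Z_p[\Z/d_{v_1}]$ have degrees in $\frac{d_{v_1}}{\gcd(d_v,d_{v_1})}\Z_p$. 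Hence a section exists if and only if $v_p(d_{v_1})\leq v_p(d_v)$ for every $v\in S'$, and when this fails the statement itself (not just your particular splitting) can break down: for a degree-$p$ cover totally ramified at a single place $v_1$ of degree $p$, with $S=\{v_1,v_2\}$ and $v_2$ a completely split place of degree $1$, one has $\bigl((\g-1)H_{v_1}\bigr)^{G_\F}=0$, so the right-hand side of the claimed isomorphism has $G_\F$-invariants $H_{v_2}^{G_\F}=\Z_p[G]$, free of rank one, whereas $L^{G_\F}\simeq\{\beta\in\Z_p[G]:\deg\beta\in p\Z_p\}$ is the maximal ideal of the local ring $\Z_p[G]$, which is not principal; so the two sides are not isomorphic. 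Consequently the promised delicate construction cannot be carried out, and both your proof and the statement have to be read with the extra hypothesis $p\nmid d_{v_1}$ (or at least $v_p(d_{v_1})\leq v_p(d_v)$ for all $v\in S'$), a point on which the paper's own argument is silent as well.
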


\begin{proof} The primes in $\ov{X}(S)$ are points in $X\times_{\F}\ov{\F}$ and have degree 1 so $\deg : H_v\rightarrow\Z_p$ is surjective.
Since $v_1$ is totally ramified, the degree map provides a decomposition $H_{v_1}=(\gamma-1)H_{v_1}+ \Z_p {\bf 1}_{H_{v_1}}$ 
(where ${\bf 1}_{H_{v_1}}$ denotes the unit element of $H_{v_1}$), moreover the previous lemma yields 
$H_{v_1}\simeq \Z_p[[G_\F]]/(1-\g^{-d_{v_1}})$ as a $\Z_p[G][[G_\F]]$-module.

We have an injective morphism of $\Z_p[G][[G_{\F}]]$-modules
\[ (\g-1)H_{v_1}\oplus(\oplus_{v\in S'} H_v)\hookrightarrow ((\gamma-1)H_{v_1}+ \Z_p {\bf 1}_{H_{v_1}})\oplus\left(\bigoplus_{v\in S'} H_v)\right) \] 
given by
\[ (\alpha,\beta)\mapsto (\alpha-deg(\beta){\bf 1}_{H_{v_1}},\beta) \,.\]
Since all points of $\ov{X}(v_1)$ have degree 1, the degree map on $H_{v_1}$ (via the identification with
$\Z_p[[G_\F]]/(1-\g^{-d_{v_1}})\,$) sends ${\bf 1}_{H_{v_1}}$ to a unit in $\Z_p$. Hence the image of the 
morphism is inside $L$, and it is actually equal to $L$ because of the above decomposition for
$H_{v_1}$ (one can also check directly that the $\Z_p$-ranks are the same).
\end{proof}

Now we can generalize \cite[Theorem 2.6]{GP2}.

\begin{thm}\label{teoap2} Assume $X\rightarrow Y$ a geometric Galois cover with a totally ramified point $v_1\in S$ of degree $d_{v_1}$. 
Then
\begin{equation}\label{AppFitt}
\Fitt_{\Z_p[G][[G_\F]]}(T_p(Jac(X)(\ov{\F}))^*)=(\Theta_{K/F,S}(\g^{-1}))\cdot
\left(1,\frac{n(G)}{\frac{1-\g^{-d_{v_1}}}{1-\g^{-1}}}\right)\cdot
\prod_{v\in S' } \left(1,\frac{n(I_v)}{e_v(\g^{-1})}\right)\ ,
\end{equation}
where $^*$ denotes the $\Z_p$-dual and, for any group $N$, we put $n(N):=\sum_{\sigma\in N}\sigma\in\Z[N]$.
\end{thm}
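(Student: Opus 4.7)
\emph{Proof plan.} The starting point is the short exact sequence \eqref{eqap0}, which I would first dualize by applying $\Hom_{\Z_p}(-,\Z_p)$. Since $T_p(\calm_{\ov S})$ is $\Z_p$-free by Theorem \ref{teoap1} and $L\subset \Z_p[\ov X(S)]$ is $\Z_p$-free as a submodule of a free $\Z_p$-module, the dual sequence
\[
0 \longrightarrow L^{*} \longrightarrow T_p(\calm_{\ov S})^{*} \longrightarrow T_p(Jac(X)(\ov{\F}))^{*} \longrightarrow 0
\]
is still exact. I would then compute $\Fitt(L^{*})$ and $\Fitt(T_p(\calm_{\ov S})^{*})$ separately and combine them through a Fitting-ideal identity for a short exact sequence whose middle term is projective and cohomologically trivial.

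For the middle term, the projectivity of $T_p(\calm_{\ov S})$ over $\Z_p[G]$ together with its cohomological triviality (Theorem \ref{teoap1}) transfer to the $\Z_p$-dual, so $\Fitt_R(T_p(\calm_{\ov S})^{*})=(\Theta_{K/F,S}(\g^{-1}))$, where $R:=\Z_p[G][[G_\F]]$ and one has to track the involution $\#$ on $R$ inverting group-like elements (being a ring automorphism it only alters the principal generator by a unit). For the left-hand term, Lemma \ref{lemA3} splits
\[
L^{*}\simeq \bigl((\g-1)H_{v_1}\bigr)^{*} \oplus \bigoplus_{v\in S'} H_v^{*},
\]
so that $\Fitt(L^{*})$ factors as a product of the Fitting ideals of the summands. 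Each $H_v^{*}$ is cyclic by Lemma \ref{LemGP} and its Fitting ideal is read off as $(e_v(\g^{-1}), n(I_v))$ (the element $n(I_v)$ being the canonical annihilator in $\Z_p[I_v]$ of the augmentation ideal $(\tau-1:\tau\in I_v)$). For the summand $((\g-1)H_{v_1})^{*}$ the extra factor $\g-1$ divides out, replacing the generator $e_{v_1}(\g^{-1})=1-\g^{-d_{v_1}}$ by the quotient $(1-\g^{-d_{v_1}})/(1-\g^{-1})$ and producing $\bigl((1-\g^{-d_{v_1}})/(1-\g^{-1}),\,n(G)\bigr)$ as Fitting ideal of this component.

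Combining the two outer Fitting ideals through the exact sequence then yields the product formula \eqref{AppFitt}, in which each correction factor of the form $(1,n/e)$ arises as a local quotient of $\Fitt(T_p(\calm_{\ov S})^{*})$ by the corresponding factor of $\Fitt(L^{*})$, read as a fractional ideal in the total ring of fractions of $R$. I expect the main obstacle to be twofold: first, establishing multiplicativity of Fitting ideals along this dualized short exact sequence, which relies essentially on the cohomological triviality of the middle term and is a Greither--Popescu-style Fitting-ideal argument; and second, giving a rigorous meaning to the fractional-ideal notation and checking that the denominators involved are not zero divisors at the relevant stages. A further subtle point is that the piece $\Z_p\cdot {\bf 1}_{H_{v_1}}$ excised in the decomposition of $H_{v_1}$ (proof of Lemma \ref{lemA3}) is precisely what accounts for the extra $(1-\g^{-1})$ appearing only in the $v_1$-correction factor, thereby distinguishing it from the corresponding factors at the other ramified places in $S'$.
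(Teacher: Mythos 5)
Your plan breaks at its central step: Fitting ideals are not multiplicative along short exact sequences, and cohomological triviality (or projectivity) of the \emph{middle} term does not repair this. In your dualized sequence $0\to L^*\to T_p(\calm_{\ov S})^*\to T_p(Jac(X)(\ov\F))^*\to 0$ the only general fact is the containment $\Fitt(L^*)\Fitt(T_p(Jac(X)(\ov\F))^*)\subseteq\Fitt(T_p(\calm_{\ov S})^*)$; the equality criteria one could quote (e.g. \cite[Lemma 3]{CG}, used in the paper for Theorem \ref{teo3.2}) require the \emph{quotient} to have projective dimension at most $1$, and here the quotient is precisely the unknown module $T_p(Jac(X)(\ov\F))^*$, which cannot have projective dimension $\leq 1$ in general since the answer \eqref{AppFitt} is not (even locally) principal. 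The Greither--Popescu input \cite[Lemma 2.4]{GP2} is not such a multiplicativity statement: it concerns a \emph{four-term} exact sequence $0\to A\to B\to C\to D\to 0$ with $B$ of projective dimension $1$ (cohomologically trivial, no finite submodules) and $C$ of projective dimension $\leq 1$, and yields $\Fitt(A^*)\Fitt(C)=\Fitt(B)\Fitt(D)$. In particular it produces the Fitting ideal of the $\Z_p$-dual directly, so there is no need to dualize \eqref{eqap0} at all, nor to argue that the involution changes $\Theta_{K/F,S}(\g^{-1})$ only by a unit (a claim you assert but do not justify, and which the paper's route avoids entirely).

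The second gap is in your computation of $\Fitt(L^*)$. By Lemma \ref{LemGP}, $H_v\simeq\Z_p[G/I_v][[G_\F]]/(e_v(\g^{-1}))$, and since $I_v$ acts trivially on $H_v$ it acts trivially on $H_v^*$ as well; the Fitting ideal of this (self-dual up to the involution) cyclic module is generated by $e_v(\g^{-1})$ together with the \emph{augmentation} ideal $(\tau-1:\tau\in I_v)$, not by $(e_v(\g^{-1}),n(I_v))$ --- these two ideals genuinely differ (e.g. $\tau-1\notin(e_v(\g^{-1}),n(I_v))$ in general). Likewise $\Fitt(((\g-1)H_{v_1})^*)$ involves $(\sigma-1:\sigma\in G)$, not $n(G)$. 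The ideals $(e_v(\g^{-1}),n(I_v))$ and $\bigl(\frac{1-\g^{-d_{v_1}}}{1-\g^{-1}},n(G)\bigr)$ are the Fitting ideals of the \emph{cokernels} in the resolutions \eqref{eqap2} and \eqref{eqap3}, where $H_v$ is embedded into $\Z_p[G][[G_\F]]/(e_v(\g^{-1}))$ by ${\bf 1}_{H_v}\mapsto n(I_v)$ and $(\g-1)H_{v_1}$ into $\Z_p[G][[G_\F]]/\bigl(\frac{1-\g^{-d_{v_1}}}{1-\g^{-1}}\bigr)$ (this last via the characteristic-polynomial computation $\det({\bf Id}-\g u\,|\,(\g-1)H_{v_1})=\frac{1-u^{d_{v_1}}}{1-u}$, which is where the excised piece $\Z_p{\bf 1}_{H_{v_1}}$ really enters). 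Splicing these with \eqref{eqap0} gives the four-term sequence to which \cite[Lemma 2.4]{GP2} applies, and \eqref{AppFitt} is then $\Fitt(X_2)\Fitt(X_4)$ divided by the principal nonzerodivisor generating $\Fitt(X_3)$ --- i.e. one \emph{multiplies} by the non-principal cokernel ideals; your proposed ``division by $\Fitt(L^*)$'' as a fractional ideal would not return the stated right-hand side even if your local Fitting ideals were correct. Without the auxiliary resolutions \eqref{eqap2}--\eqref{eqap3} the norm elements $n(I_v)$, $n(G)$ never appear, so the correction factors in \eqref{AppFitt} cannot be produced by your route.
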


\begin{proof} By Lemma \ref{lemA3}, $L\simeq (\g-1)H_{v_1}\oplus\left(\oplus_{v\in S'}H_v\right)$ as a
$\Z_p[G][[G_\F]]$-module. For any $v\in S$ we have the following short exact sequence (compare with \cite{GP2} after Lemma 2.4)
\begin{equation}\label{eqap2}
0\rightarrow H_v\rightarrow \Z_p[G][[G_{\F}]]/e_{v}(\g^{-1})\rightarrow \Z_p[G][[G_{\F}]]/(e_v(\g^{-1}),n(I_v))\rightarrow 0
\end{equation}
(where the map on the left sends ${\bf 1}_{H_v}$ to $n(I_v)$\,).
 We now obtain a short exact sequence for the $\Z_p[G][[G_\F]]$-module $(\g-1)H_{v_1}$ (which is not trivial whenever $d_{v_1}>1$). 
Write
\[ (\g-1)H_{v_1}=(\g -1)\Z_p{\bf 1}_{v_1}+\cdots+(\g^{d_{v_1}-1}-\g^{d_{v_1}-2})\Z_p{\bf 1}_{v_1} \]
as a free $\Z_p[G]$-module of rank $d_{v_1}-1$, and put $w_{i}:=(\g^i-\g^{i-1}){\bf 1}_{v_1}$. Fixing the basis
$w_1,\ldots,w_{d_{v_1}-1}$ and applying \cite[Proposition 2.1]{GP1} we compute
\[ \det({\bf Id} - \g u\,|\,(\g-1)H_{v_1})=1+u+u^2+\ldots+u^{d_{v_1}-1}=\frac{1-u^{d_{v_1}}}{1-u} \,.\]
Therefore one has a short exact sequence
\begin{equation}\label{eqap3}
(\g-1)H_{v_1} \iri
\Z_p[G][[G_\F]]/\left(\frac{1-\g^{-d_{v_1}}}{1-\g^{-1}}\right) \sri
\Z_p[G][[G_\F]]/\left(\frac{1-\g^{-d_{v_1}}}{1-\g^{-1}},n(G)\right)\,.
\end{equation}
Putting together equations \eqref{eqap0}, \eqref{eqap2} and \eqref{eqap3} we obtain the four term exact
sequence
\[ T_p(Jac(X)(\ov{\F})) \iri T_p(\calm_{\ov{S}}) \rightarrow
\bigoplus_{v\in S'}\Z_p[G][[G_\F]]/e_v(\g^{-1})\oplus
\Z_p[G][[G_\F]]/\left(\frac{1-\g^{-d_{v_1}}}{1-\g^{-1}}\right) \]
\vspace{-1truecm}
\[ \hspace{4.5truecm} \xymatrix{ \ar@{->>}[d] \\ \ } \]
\vspace{-.5truecm}
\[ \hspace{3truecm} \bigoplus_{v\in S'}\Z_p[G][[G_\F]]/(e_v(\g^{-1}),n(I_v))\oplus
\Z_p[G][[G_\F]]/\left(\frac{1-\g^{-d_{v_1}}}{1-\g^{-1}},n(G)\right) .\] 
Denote by $X_2$, $X_3$ and $X_4$ the second, third and fourth modules appearing in the sequence above. All modules are finitely
generated and free over $\Z_p$, moreover $X_3$ has projective dimension 0 or 1 over $\Z_p[G][[G_\F]]$, while $X_2$ has projective
dimension 1 over $\Z_p[G][[G_\F]]$ because it has no non-trivial finite submodules (enough by \cite[Proposition 2.2 and Lemma
2.3]{Po}) and is $G$-cohomologically trivial by Theorem \ref{teoap1}. With these properties \cite[Lemma 2.4]{GP2} yields
\[ \Fitt_{\Z_p[G][[G_\F]]}(T_p(Jac(X)(\ov{\F}))^*)\Fitt_{\Z_p[G][[G_\F]]}(X_3) \!\! = \!\!
\Fitt_{\Z_p[G][[G_\F]]}(X_2)\Fitt_{\Z_p[G][[G_\F]]}(X_4).\] Since
\[ \Fitt_{\Z_p[G][[G_\F]]}(X_3)=\left(\frac{1-\g^{-d_{v_1}}}{1-\g^{-1}}\right)\cdot\left(\prod_{v\in S'}e_v(\g^{-1})\right) \]
and
\[ \Fitt_{\Z_p[G][[G_\F]]}(X_4)=\left(\frac{1-\g^{-d_{v_1}}}{1-\g^{-1}},n(G)\right)\cdot\prod_{v\in S'}(e_v(\g^{-1}),n(I_v)) \ ,\]
Theorem \ref{teoap1} immediately implies equation \eqref{AppFitt}.\end{proof}

To apply this to class groups just note that
\[ T_p(Jac(X)(\ov{\F}))^*\simeq \Hom(Jac(X)(\ov{\F})\{ p\},\Q_p/\Z_p) \]
where $Jac(X)(\ov{\F})\{ p\}$ denotes the divisible group given by the $p$-power torsion elements and $^*$ indicates the $\Z_p$-dual.
Consider the projection morphism $\pi^{G_\F}:\Z_p[G][[G_\F]]\rightarrow\Z_p[G]$ mapping $\g$ to 1.
For a finitely generated $\Z_p[G][[G_\F]]$-module $M$, we have the equality
\[ \pi^{G_\F}\left(\Fitt_{\Z_p[G][[G_\F]]}(M)\right)=\Fitt_{\Z_p[G]}(M_{G_\F}) ,\]
where $M_{G_\F}$ denotes the $G_\F$-coinvariants $M/(\gamma-1)M$.
Moreover the $G_\F$-coinvariants of $T_p(Jac(X)(\ov{\F}))^*$ correspond to
\[ \begin{array}{ll} (T_p(Jac(X)(\ov{\F}))^*)_{G_\F} & \simeq\Hom (Jac(X)(\ov{\F})^{G_\F},\Q_p/\Z_p) \\
\ & = \Hom(Jac(X)(\F),\Q_p/\Z_p)=\calCl(X)^\vee  \,,\end{array}\] 
where the final module is the Pontrjagin dual of the $p$-part of the class group associated with 
$X$ (see \cite[Lemma 4.6 and Remark 4.7]{ABBL}).

The following generalizes \cite[Theorem 3.2]{GP2}.

\begin{thm}\label{teoap4}
Assume $X\rightarrow Y$ is a geometric Galois cover with a totally ramified point $v_1\in S$. Then
\[ \Fitt_{\Z_p[G]}(\calCl(X)^\vee)=\langle\, g_{\mathcal{W}} \cdot
cor^G_{G/I_{\mathcal{W}}}(\Theta_{K^{\mathcal{W}}/F,S - \mathcal{W}}(1)) \,,\,\mathcal{W}\subset S'\,\rangle\,,\] 
where $cor$ denotes the corestriction map, and for any $T\subset S$ we write $I_{T}$ for the compositum of all inertia groups $I_v$ with
$v\in {T}$, $K^{T}:=K^{I_{T}}$ and $g_{T}=\displaystyle{\frac{\prod_{v\in T}|I_v|}{|\prod_{v\in T} I_v|}}\in\N$.
\end{thm}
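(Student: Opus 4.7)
My plan is to apply Theorem~\ref{teoap2} and then specialize to the $G_\F$-coinvariants. I use the isomorphism $(T_p(Jac(X)(\ov{\F}))^*)_{G_\F}\simeq \calCl(X)^\vee$ recalled just before the statement, together with the fact that Fitting ideals commute with coinvariants under the projection $\pi^{G_\F}:\Z_p[G][[G_\F]]\to \Z_p[G]$ sending $\g\mapsto 1$. Hence it suffices to compute the image under $\g\mapsto 1$ of the right-hand side of \eqref{AppFitt}.

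Expanding the product of ideals in \eqref{AppFitt}, the Fitting ideal is generated by elements of the form
\[
\Theta_{K/F,S}(\g^{-1})\cdot\prod_{v\in \mathcal{W}}\frac{n(I_v)}{e_v(\g^{-1})}\cdot \left(\frac{n(G)}{(1-\g^{-d_{v_1}})/(1-\g^{-1})}\right)^{\!\!\epsilon},
\]
indexed by $\mathcal{W}\subset S'$ and $\epsilon\in\{0,1\}$. For the $\epsilon=0$ family I combine two standard identities. The first is the combinatorial identity $\prod_{v\in \mathcal{W}}n(I_v)=g_\mathcal{W}\cdot n(I_\mathcal{W})$, which is essentially the definition of $g_\mathcal{W}$. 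The second is the Euler-factor decomposition
\[
\pi_{G/I_\mathcal{W}}(\Theta_{K/F,S}(u))=\Theta_{K^\mathcal{W}/F,S-\mathcal{W}}(u)\cdot \prod_{v\in \mathcal{W}} e_v(u),
\]
together with the description of corestriction $cor^{G}_{G/I_\mathcal{W}}$ as multiplication by $n(I_\mathcal{W})$ after a set-theoretic lift. Together they yield
\[
\Theta_{K/F,S}(u)\cdot\prod_{v\in\mathcal{W}}\frac{n(I_v)}{e_v(u)}=g_\mathcal{W}\cdot cor^{G}_{G/I_\mathcal{W}}\bigl(\Theta_{K^\mathcal{W}/F,S-\mathcal{W}}(u)\bigr),
\]
and specializing at $u=1$ produces exactly the generators in the claimed formula.

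The $\epsilon=1$ generators, which carry the extra factor coming from the totally ramified prime $v_1$, need a separate treatment. Since $v_1$ is totally ramified, $I_{v_1}=G$ and $n(G)$ acts on $\Theta_{K/F,S}(\g^{-1})$ through the augmentation map; after combining with the other Euler factors, the resulting element rewrites as one already lying in the ideal generated by the $\epsilon=0$ family (it corresponds, informally, to the $\mathcal{W}$-index being enlarged by $v_1$, but because $I_{v_1}=G$ and $K^{I_{v_1}}=F$ the associated corestriction collapses into a $\Z_p$-multiple of the previous generators). The main obstacle, and what requires the most delicate bookkeeping, is precisely this redundancy check combined with the analytic behaviour at $\g=1$: by itself $\Theta_{K/F,S}(\g^{-1})$ has a pole at $\g=1$ coming from the trivial-character component (the zeta function of $Y$), so the ideal-theoretic identities above must be read in $\Z_p[G][[G_\F]]$ \emph{before} specialization, and one must check that the zeros of $(1-\g^{-d_{v_1}})/(1-\g^{-1})$ and of the $e_v(\g^{-1})$ at $\g=1$ cancel this pole so that every listed generator is a well-defined element of $\Z_p[G]$.
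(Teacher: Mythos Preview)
Your overall strategy matches the paper's: apply Theorem~\ref{teoap2}, expand the product into generators indexed by $\mathcal{W}\subset S'$ and $\epsilon\in\{0,1\}$, rewrite via Euler relations and corestriction, then specialize $\gamma\mapsto 1$. Your treatment of the $\epsilon=0$ family is correct and essentially identical to the paper's.

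The gap is in your handling of the $\epsilon=1$ family. You claim these generators ``collapse into a $\Z_p$-multiple of the previous generators'' because $I_{v_1}=G$ and $K^{I_{v_1}}=F$. This is vague, and if one tries to make it precise one finds that the $\epsilon=1$ generator for $\mathcal{W}$ specializes to $\frac{n(G)}{d_{v_1}}$ times the $\epsilon=0$ generator for $\mathcal{W}$ at $\gamma=1$; without assuming $p\nmid d_{v_1}$ this need not lie in the $\Z_p[G]$-ideal generated by the $\epsilon=0$ family. The paper's argument is different and cleaner: it observes that the denominator appearing in Theorem~\ref{teoap2} at $v_1$ is $\frac{1-\gamma^{-d_{v_1}}}{1-\gamma^{-1}}$, whereas the Euler relation for $T=\mathcal{W}\cup\{v_1\}$ naturally produces the factor $e_{v_1}(\gamma^{-1})=1-\gamma^{-d_{v_1}}$. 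The ratio of these two is exactly $1-\gamma^{-1}$, so the $\epsilon=1$ generator equals
\[
(1-\gamma^{-1})\cdot g_T\cdot cor^G_{G/I_T}\bigl(\Theta_{K^T/F,S-T}(\gamma^{-1})\bigr),
\]
an element divisible by $1-\gamma^{-1}$ in $\Z_p[G][[G_\F]]$. Under $\pi^{G_\F}$ this vanishes, so the $\epsilon=1$ generators contribute nothing after specialization. No redundancy argument inside the $\epsilon=0$ ideal is needed.

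Your concern about a pole of $\Theta_{K/F,S}(\gamma^{-1})$ at $\gamma=1$ is misplaced: since $S\neq\emptyset$, Weil's theorem makes $\Theta_{K/F,S}(u)$ a polynomial in $u$, so all the identities hold in $\Z_p[G][[G_\F]]$ without any pole to cancel. The only place a pole could arise is in $\Theta_{F/F,\emptyset}$ when $T=S$, but the paper's argument avoids ever needing to evaluate that object at $\gamma=1$ precisely because the factor $1-\gamma^{-1}$ kills the whole term first.
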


\begin{proof}
As explained in \cite[p. 232]{GP2}, the Euler relations give us all the generators for the Fitting ideal (using Theorem \ref{teoap2}), and
we have for any subset $T$ of $S$
\[ \left(\prod_{v\in T}n(I_v)\right)\Theta_{K/F,S}(\g^{-1})=g_T\cdot \prod_{w\in T} e_v(\g^{-1})\cdot
cor^G_{G/I_T}(\Theta_{K^T/F,S-T}(\g^{-1})) \,.\] 
Moreover, if $v_1\notin T$ we obtain the relations that appear in Theorem \ref{teoap2} while, if $v_1\in T$, then
\[ \frac{n(G)}{e_{v_1}(\g^{-1})} \prod_{v\in T-v_1}\frac{n(I_v)}{e_v(\g^{-1})} \Theta_{K/F,S}(\g^{-1})
= g_T\cdot cor^G_{G/I_T} (\Theta_{K^T/F,S-T}(\g^{-1})) \,.\] 
Note that $e_{v_1}(\g^{-1})=1-\g^{-d_{v_1}}$ and consider the element
\[ e_{d_{v_1}}:=\frac{e_{v_1}(\g^{-1})}{\frac{1-\g^{-d_{v_1}}}{1-\g^{-1}}}= 1-\g^{-1}\in\Z_p[G][[G_\F]] \ ,\]
which is an unit of $\Z_p[G][[G_\F]]$. Multiplying the equalities above by $e_{d_{v_1}}$ we obtain equalities of ideals in
$\Z_p[G][[G_\F]]$.

\noindent The projection map $\pi^{G_\F}$ maps $e_{d_{v_1}}$ to zero and, using Theorem \ref{teoap2}, we obtain the claim (in particular
no more generators are needed for the Fitting ideal over $\Z_p[G]$ when $v_1\in T$).\end{proof}

\subsection{The almost totally ramified case}\label{SecAlmostTotRam} 
In this subsection, we assume that $X\rightarrow Y$ is a finite abelian geometric cover with Galois group $\widetilde{G}$, ramified at a
finite set $S$ and such that it factors through $X\rightarrow Y'\rightarrow Y$ (with $X\neq Y'$), where $\F(X)/\F(Y')$ is a $p$-extension totally 
ramified at some place $v'_1$ of $Y'$ lying above a prime $v_1$ of $Y$ and $\F(Y')/\F(Y)$ is a Galois extension of degree coprime with $p$
(i.e. $Gal(\F(X)/\F(Y'))$ is the $p$-Sylow subgroup of $\widetilde{G}$) \footnote{The fact that $p\nmid [\F(Y'):\F(Y)]$ is essential. But if there is
subextension in $\F(X)/\F(Y')$ which has degree prime to $p$ (and is totally ramified) one can move it to $\F(Y')/\F(Y)$ by enlarging
$\F(Y')$, so the fact that $\F(X)/F(Y')$ is a $p$-extension is not really restrictive.}.
We put  $\widetilde{G}\simeq Gal(\F(X)/\F(Y'))\times Gal(\F(Y')/\F(Y)):=G\times H$ 
and consider $\chi\in Hom(H,\mathbb{C}^*)$ a character of $H$. 
All such characters have values in $\mu_{|H|}$ so we fix a ring extension $W$ of $\Z_p$ containing $\mu_{|H|}$, and consider 
$\chi$ as a $p$-adic character with values in $W^*$. As usual we denote by $e_\chi$ the idempotent associated with $\chi$, i.e.
\[ e_{\chi}:=\frac{1}{|H|}\sum_{\delta\in H}\chi(\delta^{-1})\delta\in W[H] \,,\] 
and, for any $\Z_p[\wt{G}][[G_\F]]$-module $M$ we write $M(\chi)$ for the $\chi$-part of $M$ (i.e. the submodule $e_{\chi}(M\otimes_{\Z_p}W)$\,),
which is a $W[G][[G_{\mathbb{F}}]]$-module inside $M\otimes_{\Z_p}W$.
The trivial character will be denoted by $\chi_0$ as usual.

We identify $\widetilde{G}$ with $G\times H$ and denote by $\pi_H$ (resp. $\pi_G$) the canonical projection 
$W[\widetilde{G}]\rightarrow W[H]$ (resp. $W[\widetilde{G}]\rightarrow W[G]$).

In this setting we can take $\chi$-parts in the exact sequence (\ref{eqap0}) obtaining
\begin{equation}\label{EqChiPart} 
0\rightarrow T_p(Jac(X)(\overline{\F}))(\chi)\rightarrow T_p(\mathcal{M}_{\ov{S}})(\chi)\rightarrow L(\chi)\rightarrow 0\,,
\end{equation} 
where $L(\chi)$ is the ($\chi$-part of the) kernel of degree map $W[\ov{X}(S)]\rightarrow W$ (and we consider the trivial action on $W$).

\begin{lem}\label{lem2.3gen} 
Let $X\rightarrow Y$ be a geometric abelian Galois cover as above. Then we have an isomorphism of $W[G][[G_{\F}]]$-modules 
\[ L(\chi_0)\simeq (\gamma-1)H_{v_1}(\chi_0)\oplus \left(\bigoplus_{v\in S'} H_v(\chi_0) \right) \] 
where $S'=S-\{v_1\}$, and
\[ L(\chi)= \bigoplus_{v\in S} H_v(\chi)\quad for\ any\ \chi\neq\chi_0\,.\]
\end{lem}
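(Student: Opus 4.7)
The plan is to reduce this to Lemma \ref{lemA3} by passing to $\chi$-parts, exploiting the fact that $p\nmid |H|$ so that $M\mapsto M(\chi)=e_\chi(M\otimes_{\Z_p} W)$ is exact on $\Z_p[\wt{G}][[G_\F]]$-modules. I would first apply this functor to the defining sequence
\[ 0\rightarrow L\rightarrow \bigoplus_{v\in S} H_v \xrightarrow{\deg} \Z_p\rightarrow 0\]
(tensored with $W$) to get
\[ 0\rightarrow L(\chi)\rightarrow \bigoplus_{v\in S} H_v(\chi) \xrightarrow{\deg} W(\chi)\rightarrow 0.\]
Since $H$ acts trivially on $\Z_p$, we have $W(\chi)=W$ if $\chi=\chi_0$ and $W(\chi)=0$ otherwise. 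The non-trivial case is then immediate: for $\chi\neq\chi_0$ the degree map has zero target and $L(\chi)=\bigoplus_{v\in S} H_v(\chi)$.

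For the case $\chi=\chi_0$ the strategy is to re-run the proof of Lemma \ref{lemA3} inside the $\chi_0$-part, but this requires a preliminary check that $H_{v_1}(\chi_0)$ still has the shape $W[[G_\F]]/(1-\g^{-d_{v_1}})$, i.e. the same cyclic form used there. The hypothesis that $\F(X)/\F(Y')$ is totally ramified at $v'_1$ gives $G\subseteq I_{v_1}$, and the direct product structure $\wt{G}=G\times H$ then forces $I_{v_1}=G\times J$ with $J:=I_{v_1}\cap H$, so $\wt{G}/I_{v_1}\simeq H/J$. By Lemma \ref{LemGP}(ii),
\[ H_{v_1}\otimes_{\Z_p} W\simeq W[H/J][[G_\F]]/(e_{v_1}(\g^{-1})).\]
Since $e_{\chi_0}$ collapses $W[H/J]$ onto a rank-one free $W$-submodule (generated by a scalar multiple of $n(H/J)$) and $\Fr_{v_1}\in H/J$ acts trivially on the $\chi_0$-part, $e_{v_1}(\g^{-1})$ reduces to $1-\g^{-d_{v_1}}$ and we obtain the desired isomorphism.

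With this in hand I would copy step by step the argument of Lemma \ref{lemA3}: the degree map sends a generator ${\bf 1}^{(\chi_0)}_{H_{v_1}}$ of $H_{v_1}(\chi_0)$ to a unit of $W$, yielding a decomposition $H_{v_1}(\chi_0)=(\g-1)H_{v_1}(\chi_0)+W\cdot {\bf 1}^{(\chi_0)}_{H_{v_1}}$; then the assignment
\[ (\alpha,\beta)\longmapsto \bigl(\alpha-\deg(\beta)\cdot {\bf 1}^{(\chi_0)}_{H_{v_1}},\,\beta\bigr)\]
defines an injective $W[G][[G_\F]]$-morphism
\[ (\g-1)H_{v_1}(\chi_0)\oplus \bigoplus_{v\in S'} H_v(\chi_0)\hookrightarrow \bigoplus_{v\in S} H_v(\chi_0)\]
whose image lands in, and in fact equals (by the above decomposition), $L(\chi_0)$.

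The main obstacle I anticipate is exactly the identification $H_{v_1}(\chi_0)\simeq W[[G_\F]]/(1-\g^{-d_{v_1}})$: although $v_1$ is only almost totally ramified in $X/Y$ (its inertia is $G\times J$, not all of $\wt{G}$), the $\chi_0$-projection must collapse it to the totally ramified model of Lemma \ref{lemA3}. Verifying this compatibility cleanly, paying attention to how $\Fr_{v_1}$ and $n(H/J)$ interact with $e_{\chi_0}$, is the only non-bookkeeping step.
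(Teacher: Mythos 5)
Your argument is correct and follows essentially the same route as the paper: the $\chi\neq\chi_0$ case by killing the degree map after taking $\chi$-parts, and the $\chi_0$ case by first identifying $H_{v_1}(\chi_0)\simeq W[[G_\F]]/(1-\g^{-d_{v_1}})$ (your $I_{v_1}=G\times J$ observation is exactly the paper's $\widetilde{G}/I_{v_1}\simeq H/\pi_H(I_{v_1})$ together with $\chi_0$ being trivial on $\pi_H(I_{v_1})$ and $\Fr_{v_1}$) and then repeating the injection $(\alpha,\beta)\mapsto(\alpha-\deg(\beta){\bf 1},\beta)$ from Lemma \ref{lemA3}. The "obstacle" you flag is precisely the one-line verification the paper performs, so nothing is missing.
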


\begin{proof} Consider the degree map $\deg:\oplus_{v\in S} H_v\rightarrow \Z_p$. Taking $\chi$-parts for $\chi\neq \chi_0$ one immediately
has $\deg(\chi):\oplus_{v\in S} H_v(\chi)\rightarrow 0$, hence $L(\chi)= \oplus_{v\in S} H_v(\chi)$.

Now we deal the case $\chi=\chi_0$: the hypothesis on the ramification of $v_1$ yields
\[ H_{v_1}=\Z_p[\widetilde{G}/I_{v_1}][[G_{\F}]]/(e_{v_1}(\gamma^{-1}))=
\Z_p[H/\pi_{H}(I_{v_1})][[G_{\F}]]/(e_{v_1}(\gamma^{-1})) \] 
and, since $\chi_0$ is trivial on $\pi_H(I_{v_1})$ and on $\Fr_{v_1}$, we have
$H_{v_1}(\chi_0)\simeq W[[G_{\F}]]/(1-\gamma^{-d_{v_1}})$ as a $W[G][[G_{\F}]]$-module.

We have an injective morphism of $W[G][[G_{\F}]]$-modules
\[ (\gamma-1)H_{v_1}(\chi_0)\oplus\left(\bigoplus_{v\in S'}H_v(\chi_0)\right)\hookrightarrow ((\gamma-1)H_{v_1}(\chi_0) +
W{\bf 1}_{H_{v_1}(\chi_0)})\oplus \left(\bigoplus_{v\in S'}H_v(\chi_0)\right) \] 
defined exactly as in Lemma \ref{lemA3} and the proof follows the same path.
\end{proof}

\begin{lem}\label{lem2.4modchi} 
Let $X\rightarrow Y$ be an abelian geometric cover as above with $v_1\in S$ a ramified prime which is totally ramified in $X\rightarrow Y'$. 
Using the decomposition $\widetilde{G}=G\times H$, for any prime $\pr$ write $\Fr_{\pr}=(\Fr_{\pr,G},\Fr_{\pr,H})\in G\times H$.
If $\chi(\pi_H(I_v))\neq 1$, then $H_v(\chi)=0$, otherwise
\[ H_v(\chi)=W[G][[G_\F]]/(1-\chi(\Fr_{v,H}^{-1})\Fr_{v,G}^{-1}\gamma^{-d_v},\pi_G(\tau)-1 : \tau\in I_v) .\]
Moreover, we have an exact sequence
\[ H_v(\chi)\iri W[G][[G_\F]]/(1-\chi(\Fr_{v,H}^{-1})\Fr_{v,G}^{-1}\g^{-d_v})\sri
W[G][[G_\F]]/(1-\chi(\Fr_{v,H}^{-1})\Fr_{v,G}^{-1}\g^{-d_v},n(\pi_G(I_v))). \]
\end{lem}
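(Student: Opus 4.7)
The plan is to derive the statement from Lemma \ref{LemGP}(ii) by extending scalars to $W$ and taking the $\chi$-isotypic part. Concretely, starting from
\[ H_v\otimes_{\Z_p}W\simeq W[\wt G/I_v][[G_\F]]/(e_v(\g^{-1})) \]
one obtains
\[ H_v(\chi) \,=\, e_\chi\bigl(H_v\otimes_{\Z_p}W\bigr) \,\simeq\, \bigl(e_\chi W[\wt G/I_v]\bigr)[[G_\F]]\big/\bigl(e_\chi\cdot e_v(\g^{-1})\bigr), \]
so the task reduces to computing $e_\chi W[\wt G/I_v]$ and the image of $e_v(\g^{-1})$ under $e_\chi$.

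The key structural observation is that, since $\wt G = G\times H$ is abelian with $G$ a $p$-group and $|H|$ prime to $p$, a Bezout argument shows that any subgroup $I\subseteq \wt G$ splits as $I = (I\cap G)\times(I\cap H)$, with the projections identifying $I\cap G$ with $\pi_G(I)$ and $I\cap H$ with $\pi_H(I)$. Applied to $I_v$, this lets me write a generic $\tau\in I_v$ as $(\sigma,\delta)$ with $\sigma=\pi_G(\tau)$, $\delta=\pi_H(\tau)$, so that in $W[\wt G]$
\[ e_\chi\cdot\tau \,=\, \chi(\delta)\,\sigma\, e_\chi, \qquad e_\chi\cdot \Fr_v^{-1} \,=\, \chi(\Fr_{v,H}^{-1})\,\Fr_{v,G}^{-1}\, e_\chi. \]
Combining these with the ring isomorphism $e_\chi W[\wt G]\simeq W[G]$ sending $e_\chi g$ to $\chi(\pi_H(g))\pi_G(g)$ yields
\[ e_\chi W[\wt G/I_v] \,\simeq\, W[G]\big/\bigl(\chi(\delta)\sigma - 1 \,:\, (\sigma,\delta)\in I_v\bigr). \]

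Now the proof splits into two cases. If $\chi|_{\pi_H(I_v)}\neq\chi_0$, I pick $\delta_0\in\pi_H(I_v)$ with $\chi(\delta_0)\neq 1$ and use the element $(1,\delta_0)\in I_v$: the relation $\chi(\delta_0)-1$ lies in the defining ideal. Since $|H|$ is prime to $p$ and $W$ contains $\mu_{|H|}$, the element $\chi(\delta_0)-1$ is a unit in $W$, hence in $W[G][[G_\F]]$, forcing $H_v(\chi)=0$. In the opposite case, $\chi$ is trivial on all of $\pi_H(I_v)$, so the relations $\chi(\delta)\sigma-1=0$ reduce to $\pi_G(\tau)-1=0$ for $\tau\in I_v$; combining this with the transformed expression for $e_\chi\cdot e_v(\g^{-1})$ gives the presentation of $H_v(\chi)$ displayed in the statement.

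For the exact sequence I imitate the construction of \eqref{eqap2} in the $\chi$-isotypic setting: define the left-hand map by sending ${\bf 1}_{H_v(\chi)}$ to $n(\pi_G(I_v))$ (which is, up to the unit $|\pi_H(I_v)|\in W^\times$, the image of $e_\chi\cdot n(I_v)$ under the factorization $n(I_v)=n(\pi_G(I_v))\cdot n(\pi_H(I_v))$). This is well-defined because $\sigma\cdot n(\pi_G(I_v)) = n(\pi_G(I_v))$ for every $\sigma\in\pi_G(I_v)$, so the relations $\pi_G(\tau)-1$ are killed. Injectivity of this map together with the identification of its cokernel with the claimed right-hand term reduces to the standard fact that the annihilator of $n(\pi_G(I_v))$ in $W[G]$ equals $(\pi_G(\tau)-1 : \tau\in I_v)$, which follows from the decomposition $W[G]=\bigoplus_{c\in G/\pi_G(I_v)} c\cdot W[\pi_G(I_v)]$ and the analogous statement for the augmentation ideal of $W[\pi_G(I_v)]$. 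The main obstacle of the whole argument is less computational than notational: one must consistently keep track of how the direct-product decomposition $\wt G = G\times H$ interacts both with the inertia subgroup and with $e_\chi$, with the splitting $I_v=\pi_G(I_v)\times\pi_H(I_v)$ as the crucial ingredient that makes the two sides compatible.
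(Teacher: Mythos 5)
Your proposal is correct and follows essentially the same route as the paper: you take $\chi$-parts of the presentation of $H_v$ coming from Lemma \ref{LemGP}(ii), observe that $e_\chi$ turns the relations $\tau-1$ and $e_v(\g^{-1})$ into $\pi_G(\tau)-1$ and $1-\chi(\Fr_{v,H}^{-1})\Fr_{v,G}^{-1}\g^{-d_v}$ (or produces a unit, killing $H_v(\chi)$, when $\chi(\pi_H(I_v))\neq 1$), and then obtain the exact sequence from the map sending ${\bf 1}_{H_v(\chi)}$ to $n(\pi_G(I_v))$. The only real variations are your explicit splitting $I_v=\pi_G(I_v)\times\pi_H(I_v)$ (the paper leaves the vanishing case implicit) and your injectivity argument via the annihilator of $n(\pi_G(I_v))$ in $W[G]$, where the paper instead counts $W$-ranks; note that your reduction to $W[G]$ silently uses the same fact the paper states explicitly, namely that $1-\chi(\Fr_{v,H}^{-1})\Fr_{v,G}^{-1}\g^{-d_v}$ has unit leading coefficient as a polynomial in $\g^{-1}$, so that $W[G][[G_\F]]/(1-\chi(\Fr_{v,H}^{-1})\Fr_{v,G}^{-1}\g^{-d_v})$ is $W[G]$-free of rank $d_v$.
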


\begin{proof}
Recall that
\[ H_v=\Z_p[\widetilde{G}][[G_\F]]/(1-\Fr_v\gamma^{-d_v},\tau-1:\tau\in I_v)=\Z_p[\widetilde{G}/I_v][[G_\F]]/(1-\Fr_v\gamma^{-d_v}).\] 
If $\chi(\pi_H(I_v))\neq 1$, then we immediately have $H_v(\chi)=0$.

Now assume $\chi(\pi_H(I_v))=1$. Then $e_{\chi}(\tau-1)=(\pi_G(\tau)-1)e_{\chi}\in W[G]$ for any $\tau\in I_v$, where 
$\pi_G(\tau)-1\in\Z[\pi_G(I_v)]$. Thus
\[ e_{\chi}((1-\Fr_v\gamma^{-d_v}))=(1-\chi(\Fr_{v,H})\Fr_{v,G}\gamma^{-d_v})e_{\chi}\in W[G][G_{\F}].\]
Therefore, taking $\chi$-parts in the exact sequence 
\[ 0\rightarrow(1-\Fr_v\gamma^{-d_v},\tau-1:\tau\in I_v)\rightarrow \Z_p[\widetilde{G}][[G_\F]]\rightarrow H_v\rightarrow 0,\]
we have that
\[ H_v(\chi)=W[G][[G_\F]]/(1-\chi(\Fr_{v,H}^{-1})\Fr_{v,G}^{-1}\gamma^{-d_v},\pi_G(\tau)-1 : \tau\in I_v ). \]
The exact sequence follows as in \eqref{eqap2} using the remarks after \cite[Lemma 2.4]{GP2}: indeed we have a map 
$H_v(\chi)\rightarrow W[G][G_\F]/(1-\chi(Fr_{v,H}^{-1})F_{v,G}^{-1}\gamma^{-d_v})$, sending ${\bf 1}_{H_v}$ to $n(\pi_G(I_v))$, 
and all is clear in the sequence except the injectivity of this map. To work with $W$-modules we observe that
$1-\chi(Fr_{v,H}^{-1})F_{v,G}^{-1}\gamma^{-d_v}$ is a polynomial of degree $d_v$ in $\gamma^{-1}$ with leading term a unit in $W[G]$,
therefore $W[G][G_{\F}]/(1-\chi(Fr_{v,H}^{-1})F_{v,G}^{-1}\gamma^{-d_v})$ is $W$-free of rank $d_v|G|$. Similarly $H_v(\chi)$ is 
$W$-free of rank $d_v|G/\pi_G(I_v)|$ and exactness follows.
\end{proof}

\begin{rem}\label{RemNotRamInG}
Note that whenever $v$ is only ramified in $Y'\rightarrow Y$, i.e. $\pi_G(I_v)=0$, we have $n(\pi_G(I_v))=1$ and the exact sequence 
mentioned in the previous lemma reduces to the isomorphism
\[ H_v(\chi)\simeq W[G][[G_\F]]/(1-\chi(\Fr_{v,H}^{-1})\Fr_{v,G}^{-1}\gamma^{-d_v}).\]
\end{rem}

The decomposition of modules in $\chi$-parts obviously reflects on their Fitting ideals as well, hence we define
\begin{defin}\label{DefSticFuncCyc} For any $\chi\in \Hom(H,W^*)$, the {\em $\chi$-Stickelberger series} for $K/F$ is
\[ \Theta_{K/F,S,\chi}(u):=\chi(\Theta_{K/F,S})(u)\in W[G][[u]] \,.\]
\end{defin}

\noindent The usual fundamental relations for idempotents yield
\begin{equation}\label{EqChiTheta} 
e_\chi\Theta_{K/F,S}(u)=\Theta_{K/F,S,\chi}(u)e_{\chi} 
\end{equation}
and
\[ \Theta_{K/F,S}(u)=\sum_\chi \Theta_{K/F,S,\chi}(u) e_\chi\,. \]

Following the proof of Theorem \ref{teoap2} and using Lemma \ref{lem2.4modchi} we obtain 

\begin{thm}\label{teoap2b} 
Assume $X\rightarrow Y$ is an abelian geometric cover as above. If $\chi\neq\chi_0$, then
\begin{equation}\label{AppFitt2}
\begin{array}{ll} \Fitt_{W[G][[G_\F]]}(T_p(Jac(X)(\ov{\F}))(\chi)^*) & = 
\displaystyle{  \frac{\Theta_{K/F,S,\chi}(\g^{-1})}{\displaystyle{\prod_{v\in S_\chi^{(1)}} } 
(1-\chi(\Fr_{v,H})^{-1}\Fr_{v,G}^{-1}\g^{-d_v})} } \\
\ & \cdot \displaystyle{ \prod_{ v\in S_{\chi}-S_\chi^{(1)}}
\left(1,\frac{n(\pi_G(I_v))}{ 1-\chi(\Fr_{v,H})^{-1}\Fr_{v,G}^{-1}\g^{-d_v} }\right)}\,,\end{array}
\end{equation}
where $^*$ now denotes the $W$-dual, $S_{\chi}:=\{v\in S : \chi(\pi_H(I_v))=1\}$ and
$S_\chi^{(1)}:=\{v\in S_\chi : \pi_G(I_v)=0 \}$.\\
If $\chi=\chi_0$, then
\begin{equation}\label{AppFitt3}
\begin{array}{ll} \Fitt_{W[G][[G_\F]]}(T_p(Jac(X)(\ov{\F}))(\chi_0)^*) & = 
\displaystyle{  \frac{\Theta_{K/F,S,\chi_0}(\g^{-1})}{\displaystyle{ \prod_{\begin{subarray}{c}v\in S_{\chi_0}^{(1)}\\ v\neq v_1\end{subarray}}
(1-\Fr_{v,G}^{-1}\gamma^{-d_v})} }  }\\
\ & \displaystyle{ \cdot \left(1,\frac{n(G)}{\frac{1-\g^{-d_{v_1}}}{1-\g^{-1}}}\right) \cdot
\prod_{\begin{subarray}{c} v\in S-S_{\chi_0}^{(1)} \\v\neq v_1\end{subarray}} 
\left(1,\frac{n(\pi_G(I_v))}{(1-\Fr_{v,G}^{-1}\g^{-d_v})}\right)\,.}\end{array}
\end{equation}
\end{thm}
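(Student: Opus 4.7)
The plan is to mimic the proof of Theorem \ref{teoap2}, this time applied to the $\chi$-parts of sequence \eqref{EqChiPart}, feeding in the explicit descriptions of the modules $L(\chi)$ and $H_v(\chi)$ provided by Lemmas \ref{lem2.3gen} and \ref{lem2.4modchi} together with Remark \ref{RemNotRamInG}. The basic input from Theorem \ref{teoap1} is that $T_p(\calm_{\ov{S}})(\chi)$ is still cohomologically trivial, finitely generated free over $W$, and has Fitting ideal $(\Theta_{K/F,S,\chi}(\g^{-1}))$ over $W[G][[G_\F]]$ (a consequence of \eqref{EqChiTheta} and the idempotent decomposition).

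In the case $\chi\neq\chi_0$, Lemma \ref{lem2.3gen} gives $L(\chi)=\bigoplus_{v\in S_\chi} H_v(\chi)$, since the summands at $v\notin S_\chi$ vanish by Lemma \ref{lem2.4modchi}. Splitting this sum into the primes of $S_\chi^{(1)}$, where Remark \ref{RemNotRamInG} gives $H_v(\chi)\simeq W[G][[G_\F]]/(1-\chi(\Fr_{v,H})^{-1}\Fr_{v,G}^{-1}\g^{-d_v})$ outright, and the remaining primes of $S_\chi\setminus S_\chi^{(1)}$, for which Lemma \ref{lem2.4modchi} provides a two-step resolution, I would assemble a four-term exact sequence
\[ T_p(Jac(X)(\ov{\F}))(\chi) \iri T_p(\calm_{\ov{S}})(\chi) \ri C(\chi) \sri D(\chi) \]
with $C(\chi):=\bigoplus_{v\in S_\chi} W[G][[G_\F]]/(1-\chi(\Fr_{v,H})^{-1}\Fr_{v,G}^{-1}\g^{-d_v})$ and $D(\chi):=\bigoplus_{v\in S_\chi\setminus S_\chi^{(1)}} W[G][[G_\F]]/(1-\chi(\Fr_{v,H})^{-1}\Fr_{v,G}^{-1}\g^{-d_v},n(\pi_G(I_v)))$. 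Applying \cite[Lemma 2.4]{GP2}, after checking (exactly as in the proof of Theorem \ref{teoap2}) that $T_p(\calm_{\ov{S}})(\chi)$ has projective dimension $1$ and that $C(\chi)$ has projective dimension $\leq 1$ over $W[G][[G_\F]]$, produces the Fitting-ideal relation
\[ \Fitt_{W[G][[G_\F]]}(T_p(Jac(X)(\ov{\F}))(\chi)^*)\cdot\Fitt_{W[G][[G_\F]]}(C(\chi)) = (\Theta_{K/F,S,\chi}(\g^{-1}))\cdot\Fitt_{W[G][[G_\F]]}(D(\chi)) \,; \]
solving for the first Fitting ideal and rewriting each quotient of ideals as $(a,b)/(a)=(1,b/a)$ delivers \eqref{AppFitt2}.

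For the case $\chi=\chi_0$, the sole new feature is the extra summand $(\g-1)H_{v_1}(\chi_0)$ dictated by Lemma \ref{lem2.3gen}. I would handle it exactly as in Theorem \ref{teoap2}: since $\chi_0$ is trivial on $\pi_H(I_{v_1})$ and on $\Fr_{v_1,H}$, the $\chi_0$-part still admits a free $W[G]$-basis $w_1,\dots,w_{d_{v_1}-1}$, and the determinant computation via \cite[Proposition 2.1]{GP1} yields the short exact sequence obtained from \eqref{eqap3} by extension of scalars to $W$. Combining this with the $\chi_0$-parts of Lemma \ref{lem2.4modchi} for the primes $v\in S'$ (noting that $S_{\chi_0}=S$ and that the characters disappear from the Frobenius factors), I get a four-term sequence strictly analogous to the one in the proof of Theorem \ref{teoap2}, and another application of \cite[Lemma 2.4]{GP2} gives \eqref{AppFitt3}.

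The principal obstacle I expect is the careful bookkeeping among the three kinds of primes involved --- the distinguished $v_1$, the primes in $S_\chi^{(1)}$ where $\chi$ kills the full inertia subgroup so that no ``$n(\pi_G(I_v))$'' factor appears, and the remaining primes of $S_\chi$ --- together with verifying the projective-dimension hypotheses of \cite[Lemma 2.4]{GP2} after taking $\chi$-parts. For this last point, the key observation is that $T_p(\calm_{\ov{S}})(\chi)$, as a direct summand of $T_p(\calm_{\ov{S}})\otimes_{\Z_p}W$ (which has no non-trivial finite submodules by Theorem \ref{teoap1}), still has none, so that \cite[Proposition 2.2 and Lemma 2.3]{Po} apply verbatim to ensure projective dimension $1$ over $W[G][[G_\F]]$.
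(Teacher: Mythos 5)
Your proposal is correct and follows essentially the same route as the paper: taking $\chi$-parts of \eqref{EqChiPart}, feeding Lemmas \ref{lem2.3gen} and \ref{lem2.4modchi} (with Remark \ref{RemNotRamInG}) into the same four-term exact sequence, verifying the projective-dimension hypotheses exactly as in Theorem \ref{teoap2}, and applying \cite[Lemma 2.4]{GP2} together with the $\chi$-component $(\Theta_{K/F,S,\chi}(\g^{-1}))$ of Theorem \ref{teoap1}; the $\chi_0$ case is likewise handled by transporting the sequence \eqref{eqap3} to coefficients in $W$. No gaps worth noting.
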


\begin{proof}
Assume first $\chi\neq\chi_0$. From Lemmas \ref{lem2.3gen} and \ref{lem2.4modchi} (and recalling Remark \ref{RemNotRamInG})
we have the four term exact sequence
\[ T_p(Jac(X)(\ov{\F}))(\chi) \iri T_p(\calm_{\ov{S}})(\chi) \rightarrow
\bigoplus_{v\in S_{\chi}}W[G][[G_\F]]/(1-\chi(\Fr_{v,H}^{-1})\Fr_{v,G}^{-1}\g^{-d_v})\] \vspace{-1truecm}
\[ \hspace{4.5truecm} \xymatrix{ \ar@{->>}[d] \\ \ } \]
\vspace{-.3truecm}
\[ \hspace{3truecm} \bigoplus_{v\in S_{\chi}-S_\chi^{(1)}}W[G][[G_\F]]/((1-\chi(\Fr_{v,H}^{-1})\Fr_{v,G}^{-1}\g^{-d_v}),n(\pi_G(I_v))) .\] 
Denote by $X_2$, $X_3$ and $X_4$ the second, third and fourth modules appearing in the sequence above. All modules are finitely
generated and free over $W$, moreover $X_3$ has projective dimension 0 or 1 over $W[G][[G_\F]]$, while $X_2$ has projective dimension 1
over $W[G][[G_\F]]$ because it has no non-trivial finite submodules and is $G$-cohomologically trivial by (an easy application of) 
Theorem \ref{teoap1}. Then \cite[Lemma 2.4]{GP2} yields
\[ \Fitt_{W[G][[G_\F]]}(T_p(Jac(X)(\ov{\F}))(\chi)^*)\Fitt_{W[G][[G_\F]]}(X_3) \!\! = \!\!
\Fitt_{W[G][[G_\F]]}(X_2)\Fitt_{W[G][[G_\F]]}(X_4).\] 
Since
\[ \Fitt_{W[G][[G_\F]]}(X_3)=\left(\prod_{v\in S_{\chi}}(1-\chi(\Fr_{v,H}^{-1})\Fr_{v,G}^{-1}\g^{-d_v})\right) \]
and
\[ \Fitt_{\Z_p[G][[G_\F]]}(X_4)=\prod_{v\in S_{\chi}-S_\chi^{(1)}}(1-\chi(\Fr_{v,H}^{-1})\Fr_{v,G}^{-1}\g^{-d_v},n(\pi_G(I_v))) \ ,\]
Theorem \ref{teoap1} immediately implies equation \eqref{AppFitt2}.

Assume now $\chi=\chi_0$: obviously $S_{\chi_0}=S$. Since $H_{v_1}(\chi_0)=W[[G_{\F}]]/(1-\gamma^{-d_{v_1}})$, with
the same argument of the proof of Theorem \ref{teoap2}, we find an analog of the exact sequence (\ref{eqap3}) that now reads as
\[  (\g-1)H_{v_1}(\chi_0) \iri W[G][[G_\F]]/\left(\frac{1-\g^{-d_{v_1}}}{1-\g^{-1}}\right) \sri
W[G][[G_\F]]/\left(\frac{1-\g^{-d_{v_1}}}{1-\g^{-1}},n(G)\right) .\]
As above we obtain the following four term exact sequence 
\[ T_p(Jac(X)(\ov{\F}))(\chi_0) \iri T_p(\calm_{\ov{S}})(\chi_0) \rightarrow
W[G][[G_\F]]/\left(\frac{1-\g^{-d_{v_1}}}{1-\g^{-1}}\right) \bigoplus_{\begin{subarray}{c} v\in S\\ v\neq v_1 \end{subarray}}
\displaystyle{ \frac{W[G][[G_\F]]}{(1-\Fr_{v,G}^{-1}\g^{-d_v})} } \]
\vspace{-1truecm}
\[ \hspace{4.5truecm} \xymatrix{ \ar@{->>}[d] \\ \ } \]
\vspace{-.5truecm}
\[ \hspace{2truecm} W[G][[G_\F]]/\left(\frac{1-\g^{-d_{v_1}}}{1-\g^{-1}},n(G)\right) 
\bigoplus_{\begin{subarray}{c} v\in S-S_{\chi_0}^{(1)}\\ v\neq v_1 \end{subarray}}W[G][[G_\F]]/(1-\Fr_{v,G}^{-1}\g^{-d_v},n(\pi_G(I_v)))  \] 
(because of $\chi_0$ we could actually use $\Z_p$ in place of $W$ here, we keep $W$ for coherence with the other formulas). 
Denote by $Y_3$ and $Y_4$ the third and fourth modules appearing in this last sequence.
Another application of \cite[Lemma 2.4]{GP2}, the facts that
\[ \Fitt_{W[G][[G_\F]]}(Y_3)=\left(\frac{1-\g^{-d_{v_1}}}{1-\g^{-1}}\right)\cdot
\left(\prod_{\begin{subarray}{c} v\in S\\ v\neq v_1 \end{subarray}}(1-\Fr_{v,G}^{-1}\g^{-d_v})\right) \]
and
\[ \Fitt_{\Z_p[G][[G_\F]]}(Y_4)=\left(\frac{1-\g^{-d_{v_1}}}{1-\g^{-1}},n(G)\right)\cdot
\prod_{\begin{subarray}{c} v\in S-S_{\chi_0}^{(1)} \\ v\neq v_1 \end{subarray}}(1-\Fr_{v,G}^{-1}\g^{-d_v},n(\pi_G(I_v))) \ ,\]
and Theorem \ref{teoap1} immediately imply equation \eqref{AppFitt3}.
\end{proof}

\begin{rem}\label{RemExtra0}
The first factor appearing in the Fitting ideals of Theorem \ref{teoap2b} hints at the possibility of finding {\em extra zeroes} for the 
Stickelberger element when specializing $\gamma$ to 1. As predictable the number of such zeroes depends on the behavior of 
ramified primes and, in particular, on the cardinality of the set $S_\chi^{(1)}$. We shall see an explicit example in the next section.
\end{rem}

Similarly we can obtain an analog of Theorem \ref{teoap4} but we can safely leave this to the reader (there is nothing new in that computation
but formulas get a bit involved), and move to the arithmetic applications
for the Carlitz-Hayes cyclotomic extensions.

\end{section}

\begin{section}{Fitting ideals in Carlitz-Hayes cyclotomic extension}\label{SecFittIdCycExt}
We consider a well-known abelian cover arising from class field theory.
Let $\Phi$ be a sign normalized rank 1 Drinfeld module, i.e. a ring homomorphism $\Phi:A\rightarrow H_A\{\tau\}$, where $H_A\{\tau\}$ is
the ring of skew-polynomials in the variable $\tau$ with coefficients in $H_A$ (for more details see \cite[Chapter 7]{Goss}). For any
ideal $\mathfrak{a}$ of $A$, denote by $\Phi[\mathfrak{a}]$ the $\mathfrak{a}$-torsion of $\Phi$: we recall that $H_A\{\tau\}$ is
right-euclidean so, if we let $\Phi_{\mathfrak{a}}$ be the unique monic generator of the ideal generated by $(\Phi(a):a\in\mathfrak{a})$,
we have 
\[ \Phi[\mathfrak{a}]:=\{ x\in F^{sep} : \Phi_{\mathfrak{a}}(x)=0\} \,,\]
where $F^{sep}$ is a separable closure of $F$ and we interpret $\Phi_{\mathfrak{a}}$ as a polynomial via $\tau(x):=x^{\#\F}$.

It is well-known that $F(\mathfrak{a}):=H_A(\Phi[\mathfrak{a}])$ is an abelian extension of $F$ where the only ramified primes are the
ones dividing $\mathfrak{a}$ and $\infty$. The inertia group of $\infty$ coincides with its decomposition group and is isomorphic to
$\F^*$ (recall that we assume $\deg(\infty)=d_\infty=1$). Moreover, when $\mathfrak{a}=\pr^n$ is a power of a prime $\pr$, 
$F(\mathfrak{a})/H_A$ is totally ramified at the place $\pr$ (see \cite[Chapter 7]{Goss}). The infinite Galois extension 
$\calf_\pr:=\cup_n F(\pr^n)$ has properties similar to the ones of $\cup_n k(\mu_{p^n})$ for a number field $k$, hence
it will be called the {\em $\pr$-cyclotomic extension} of $F$. To apply the results of the previous section we consider
\[ F_0:=F=\F(Y)\qquad F(\pr):=F_1=\F(Y')\qquad F_n:=F(\pr^n)=\F(X_n)\ n>1 ,\]
we use $X_n$ in place of $X$ (and later on $G_n$, $I_{\infty,n}$, $I_{\pr,n}$ and so on) to keep track of the {\em layer} of the cyclotomic extension 
we are dealing with. To fit the previous hypotheses, we assume from now on that $h^0(F)=[H_A:F]$ is prime with $p$ so that 
$[F_1:F]=(q^{d_{\pr}}-1)h^0(F)$ is prime with $p$ as well. We put $W$ as a ring extension of $\Z_p$ containing $\mu_{|Gal(F_1/F)|}$ and recall 
that $G_n$ is a $p$-group. The rest of the setting of Section \ref{SecFittId} translates into
\[ Gal(F_n/F)=\widetilde{G}_n\simeq Gal(F_n/F_1)\times Gal(F_1/F)=:G_n\times H.\]
 The set of ramified primes is $S=\{\pr,\infty\}$ with $I_\infty:=I_{\infty,n}\simeq \F^* \iri H$ and $F_n^{I_{\pr,n}}=H_A$ for any $n$. 

By Lemma \ref{LemGP} we have that
\[H_{n,\infty}\simeq \Z_p[G_n\times H/\F^*][[G_{\F}]]/(1-\gamma^{-1})\]
(note that we can choose $\Fr_{\infty}=1$ because the decomposition and inertia groups of $\infty$ coincide) and
\[ H_{n,\pr}\simeq\Z_p[Gal(H_A/F)][[G_{\F}]]/(1-\Fr_{\pr}\gamma^{-d_{\pr}}).\]

 \begin{defin} \label{DefCharType}
Let $\chi$ be a character of $H$, i.e. $\chi\in Hom(H,W^*)$. We distinguish three types of characters
\begin{itemize}
\item $\chi$ is said to be of type 1 if $\chi(I_{\infty})\neq 1$;
\item $\chi$ is said to be of type 2 if $\chi(I_{\infty})=1$ and $\chi(Gal(F_1/H_A))\neq 1$;
\item $\chi$ is said to be of type 3 if $\chi(Gal(F_1/H_A))=1$.
\end{itemize}
Among the characters of type 3, there is the trivial one which will be denoted as usual by $\chi_0$.
 \end{defin}

From Lemma \ref{lem2.4modchi} we immediately obtain
\[ H_{n,\pr}(\chi)\simeq\left\{\begin{array}{cl}
0 & {\rm if}\ \chi\ {\rm is\ of\ type}\ 1\ {\rm or}\ 2\\
W[[G_{\F}]]/(1-\chi(\Fr_\pr^{-1})\gamma^{-d_\pr}) & {\rm otherwise}
\end{array}\right. \]
and 
\[ H_{n,\infty}(\chi)\simeq\left\{\begin{array}{cl}
0 & {\rm if}\ \chi\ {\rm is\ of\ type}\ 1\\
W[G_n] & {\rm otherwise.}
\end{array}\right. \]

Characters of type 1 and 2 have a behavior similar to nontrivial characters for the basic case of the rational function field $\F(t)$
(detailed in \cite{ABBL}): indeed for those characters we can extend the results of the previous section to include the computation 
of the Fitting ideals of class groups (not only of its dual). We denote by $T_p(F_n)$ the Tate module of the $\ov{\F}$-rational
points of the Jacobian of $X_n$ (which here plays the role of $T_p(Jac(X)(\ov{\F}))$ of the previous section). 

\begin{thm}\label{teo3.2} 
Let $\chi$ be a character of type 1 or 2. We have
\[ \Fitt_{W[G_n][[G_\F]]}(T_p(F_n)(\chi))=\left\{\begin{array}{cl}
\left(\Theta_{F_n/F,S,\chi}(\gamma^{-1})\right) & if\ \chi\ is\ of\ type\ 1\\
\displaystyle{\left(\frac{\Theta_{F_n/F,S,\chi}(\gamma^{-1})}{1-\gamma^{-1}}\right)} & if\ \chi\ is\ of\ type\ 2 .
\end{array}\right. \] 
\end{thm}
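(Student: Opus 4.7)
The plan is to take $\chi$-parts of the fundamental short exact sequence (\ref{eqap0}) and exploit the drastic simplification of $L(\chi)$ available for characters of type 1 and type 2. The key preliminary observation is that since $H_A/F$ is totally split (in particular, unramified) at $\infty$, we have $I_\infty \subseteq \Gal(F_1/H_A) = \pi_H(I_\pr)$. Consequently, whenever $\chi(I_\infty)\neq 1$ we also have $\chi(\pi_H(I_\pr))\neq 1$, and by Lemma \ref{lem2.4modchi} this forces $H_{n,\pr}(\chi)=0$. Thus in either case (type 1 or type 2) the factor at $\pr$ drops out of $L(\chi)$.

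For type 1 characters, Lemma \ref{lem2.4modchi} additionally gives $H_{n,\infty}(\chi)=0$, so by Lemma \ref{lem2.3gen} we have $L(\chi)=0$. The $\chi$-part of (\ref{eqap0}) then collapses to an isomorphism $T_p(F_n)(\chi)\simeq T_p(\calm_{\ov{S}})(\chi)$, and the formula follows immediately by multiplying the identity in Theorem \ref{teoap1} by the idempotent $e_\chi$, using (\ref{EqChiTheta}).

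For type 2 characters we still have $H_{n,\pr}(\chi)=0$, while $\chi(I_\infty)=1$ combined with $d_\infty=1$, $\Fr_\infty=1$ and $\pi_G(I_\infty)=1$ in Lemma \ref{lem2.4modchi} yields $H_{n,\infty}(\chi)\simeq W[G_n][[G_\F]]/(1-\gamma^{-1})$. Hence Lemma \ref{lem2.3gen} produces a short exact sequence
\[ 0\to T_p(F_n)(\chi)\to T_p(\calm_{\ov{S}})(\chi)\to W[G_n][[G_\F]]/(1-\gamma^{-1})\to 0. \]
The middle module has Fitting ideal $(\Theta_{F_n/F,S,\chi}(\gamma^{-1}))$ by the $\chi$-part of Theorem \ref{teoap1} and projective dimension $1$, while the right-hand module is cyclic with free resolution $0\to R\xrightarrow{1-\gamma^{-1}}R\to R/(1-\gamma^{-1})\to 0$ (where $R:=W[G_n][[G_\F]]$). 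A horseshoe-lemma construction then produces a compatible square presentation of $T_p(F_n)(\chi)$ whose determinant, once multiplied by $1-\gamma^{-1}$, generates $(\Theta_{F_n/F,S,\chi}(\gamma^{-1}))$; the required divisibility of $\Theta_{F_n/F,S,\chi}(\gamma^{-1})$ by $1-\gamma^{-1}$ and the desired formula for $\Fitt(T_p(F_n)(\chi))$ emerge simultaneously.

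The principal obstacle is justifying the multiplicativity of Fitting ideals in the type 2 short exact sequence: Fitting ideals are not multiplicative in short exact sequences in general, so one needs the projective-dimension-one hypothesis on the middle term (which is $G_n$-cohomologically trivial with no nontrivial finite submodules, hence amenable to the arguments of \cite[Lemma 2.4]{GP2}) together with the fact that $1-\gamma^{-1}$ is a non-zero-divisor in $R$, to ensure that square presentations of compatible sizes are available. These ingredients are essentially those already used in the proof of Theorem \ref{teoap2b}, now applied directly to the module (rather than its $W$-dual), which avoids any extra duality step.
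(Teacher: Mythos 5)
Your argument is correct in outline and follows the paper's proof of Theorem \ref{teo3.2} almost step by step: the same $\chi$-part of \eqref{eqap0}, the same identification of $L_n(\chi)$ through Lemmas \ref{lem2.3gen} and \ref{lem2.4modchi} (your remark that $I_\infty\subseteq\Gal(F_1/H_A)=\pi_H(I_{\pr,n})$, so that type~1 already forces $H_{n,\pr}(\chi)=0$, is exactly what underlies the paper's displayed computation of $H_{n,\pr}(\chi)$), and the same use of the $\chi$-part of Theorem \ref{teoap1} for the middle term; the type~1 case is then identical. The only place where you deviate is the multiplicativity step for type~2: the paper simply applies \cite[Lemma 3]{CG} to the sequence \eqref{ExSeqType12} to get $(1-\gamma^{-1})\Fitt_{W[G_n][[G_\F]]}(T_p(F_n)(\chi))=\Fitt_{W[G_n][[G_\F]]}(T_p(\calm_{n,S_n})(\chi))$, whereas you sketch a square-presentation argument. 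Your sketch is repairable but aims at the wrong hypothesis: what makes the Fitting ideals multiplicative here is that the \emph{quotient} $L_n(\chi)\simeq W[G_n][[G_\F]]/(1-\gamma^{-1})$ has a length-one square free resolution given by the non-zero-divisor $1-\gamma^{-1}$ (precisely the hypothesis of \cite[Lemma 3]{CG}); lifting this resolution against $T_p(\calm_{n,S_n})(\chi)\twoheadrightarrow L_n(\chi)$ gives a block-triangular presentation of the \emph{middle} term whose maximal minors are all of the form $(1-\gamma^{-1})$ times a maximal minor of a presentation of the kernel, so no projective-dimension or cohomological-triviality hypothesis on $T_p(\calm_{n,S_n})(\chi)$ is needed, and the divisibility of $\Theta_{F_n/F,S,\chi}(\gamma^{-1})$ by $1-\gamma^{-1}$ falls out as you say. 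Note also that the horseshoe lemma produces a presentation of the middle term from presentations of the sub and the quotient, not a presentation of the submodule from the other two, so as written your construction does not literally yield the claimed square presentation of $T_p(F_n)(\chi)$; replacing that step by the Cornacchia--Greither lemma (or by the block-minor computation just described) closes the gap and recovers the paper's proof.
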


\begin{proof}
The exact sequence \eqref{EqChiPart} here reads as
\begin{equation} \label{ExSeqType12}
 0\rightarrow T_p(F_n)(\chi)\rightarrow T_p(\mathcal{M}_{n,S_n})(\chi)\rightarrow L_n(\chi)\rightarrow 0,
\end{equation}
(where $S_n$ is the set of primes of $F_n$ lying above primes in $S$) and Theorem \ref{teoap1} yields (taking $\chi$-parts)
\[ Fitt_{W[G_n][[G_{\F}]]}(T_p(\mathcal{M}_{n,S_n})(\chi))=(\Theta_{F_n/F,S,\chi}(\gamma^{-1})). \]
By Lemma \ref{lem2.3gen} and the computations above we have
\[ L_n(\chi)=\left\{\begin{array}{cl} 0 & {\rm if}\ \chi\ {\rm is\ of\ type}\ 1\\
W[G_n] & {\rm if}\ \chi\ {\rm is\ of\ type}\ 2.
\end{array}\right.\] 
Now the statement for type 1 characters is obvious. For characters of type 2 apply \cite[Lemma 3]{CG} to the sequence
\eqref{ExSeqType12}  to obtain
\[ (1-\gamma^{-1})\Fitt_{W[G_n][[G_{\F}]]}(T_p(F_n)(\chi))=\Fitt_{W[G_n][[G_{\F}]]}(T_p(\mathcal{M}_{n,S_n})(\chi))\]
and conclude the proof.
\end{proof}

For all characters, as a consequence of Theorem \ref{teoap2b}, we have
\begin{thm}\label{teo3.3} 
With notations and hypotheses as above, $\Fitt_{W[G_n][[G_\F]]}(T_p(F_n)(\chi)^*)$ is equal to:
\[ \begin{array}{ll}
\left(\Theta_{F_n/F,S,\chi}(\gamma^{-1})\right) & \chi\ of\ type\ 1 \\
\displaystyle{ \left( \frac{\Theta_{F_n/F,S,\chi}(\gamma^{-1})}{1-\gamma^{-1}} \right) } & \chi\ of\ type\ 2 \\
\displaystyle{ \frac{\Theta_{F_n/F,S,\chi}(\gamma^{-1})}{(1-\gamma^{-1})}\cdot\left(1,\frac{n(G_n)}{1-\chi(\Fr_{\pr,H})^{-1}\gamma^{-d_{\pr}}}\right) }& 
\chi\ of\ type\ 3,\ \chi(\Fr_{\pr,H})\neq 1 \\
\displaystyle{ \frac{\Theta_{F_n/F,S,\chi}(\gamma^{-1})}{(1-\gamma^{-1})^2}\cdot
\left(1-\gamma^{-1},\frac{n(G_n)}{1+\gamma^{-1}+\ldots+\gamma^{-d_{\pr}+1}} \right) } &
\chi\neq \chi_0\ of\ type\ 3,\ \chi(\Fr_{\pr,H})=1\\
\displaystyle{ \frac{\Theta_{F_n/F,S,\chi}(\gamma^{-1})}{1-\gamma^{-1}}\cdot\left(1,\frac{n(G_n)}{1+\gamma^{-1}+\ldots+\gamma^{-d_{\pr}+1}}\right) }
& \chi=\chi_0
\end{array} . \]
\end{thm}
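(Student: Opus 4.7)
The plan is to deduce Theorem \ref{teo3.3} as a direct specialization of Theorem \ref{teoap2b}: for nontrivial characters I will apply formula \eqref{AppFitt2}, and for $\chi_0$ formula \eqref{AppFitt3} with the totally ramified place in the $p$-extension taken to be $v_1 = \pr$. The whole argument then reduces to correctly identifying the sets $S_\chi$, $S_\chi^{(1)}$ and the Frobenius/inertia data at the two places $\pr$ and $\infty$, followed by a routine case analysis.

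First I would assemble the ramification data. Since $H_A/F$ is unramified while $\pr$ is totally ramified in $F_1/H_A$ and in $F_n/F_1$, one has $\pi_H(I_\pr) = Gal(F_1/H_A)$, $\pi_G(I_\pr) = G_n$, and $\Fr_{\pr,G} = 1$ (inertia equals decomposition at $\pr$ inside $G_n$). The prime $\infty$ is totally split in $H_A/F$, totally ramified in $F_1/H_A$ with inertia $\F^*$ and unramified in $F_n/F_1$, so $\pi_H(I_\infty) = \F^*$, $\pi_G(I_\infty) = 0$, $\Fr_{\infty,G} = \Fr_{\infty,H} = 1$ and $d_\infty = 1$. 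Crucially, $H_A$ is unramified at $\infty$, which forces the inclusion $\F^* \subset Gal(F_1/H_A)$, i.e.\ $\pi_H(I_\infty) \subset \pi_H(I_\pr)$.

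Given these data the case analysis is immediate. For type 1, $\chi$ is nontrivial on $I_\infty = \F^*$, hence by the inclusion above also nontrivial on $Gal(F_1/H_A) = \pi_H(I_\pr)$, so $S_\chi = \emptyset$ and \eqref{AppFitt2} reduces to $(\Theta_{F_n/F,S,\chi}(\gamma^{-1}))$. For type 2 only $\infty$ lies in $S_\chi$ (and in $S_\chi^{(1)}$), contributing the denominator $1 - \gamma^{-1}$ and an empty correction product. For type 3 both primes lie in $S_\chi$ with $S_\chi^{(1)} = \{\infty\}$, so \eqref{AppFitt2} yields $\tfrac{\Theta_{F_n/F,S,\chi}(\gamma^{-1})}{1-\gamma^{-1}}\bigl(1,\tfrac{n(G_n)}{1-\chi(\Fr_{\pr,H})^{-1}\gamma^{-d_\pr}}\bigr)$; this is the stated formula when $\chi(\Fr_{\pr,H}) \neq 1$, while when $\chi(\Fr_{\pr,H}) = 1$ (and $\chi\neq\chi_0$) the factorization $1-\gamma^{-d_\pr} = (1-\gamma^{-1})(1+\gamma^{-1}+\cdots+\gamma^{-d_\pr+1})$ lets one extract an additional $(1-\gamma^{-1})$ from the denominator into the ideal generators, giving the displayed form with $(1-\gamma^{-1})^2$ in the denominator. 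For $\chi_0$ one plugs $v_1 = \pr$ into \eqref{AppFitt3}: $\infty$ contributes $1-\gamma^{-1}$ to the denominator, the middle factor becomes $(1, n(G_n)/(1+\gamma^{-1}+\cdots+\gamma^{-d_\pr+1}))$, and the residual product over $S\setminus(S_{\chi_0}^{(1)}\cup\{\pr\})$ is empty. The main delicate point is the fractional-ideal rewriting in the $\chi(\Fr_{\pr,H})=1$ case, which is what makes the double trivial zero at $\gamma=1$ structurally visible; once the substitutions have been made this is purely formal.
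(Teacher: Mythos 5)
Your proposal is correct and follows essentially the same route as the paper: the proof there is exactly a specialization of Theorem \ref{teoap2b} (formula \eqref{AppFitt2} for $\chi\neq\chi_0$, formula \eqref{AppFitt3} with $v_1=\pr$ for $\chi_0$), using the same identifications $S_{\chi}$, $S_\chi^{(1)}=\{\infty\}$, $\Fr_{\infty}=1$, $\Fr_{\pr,G_n}=1$, $\pi_G(I_\pr)=G_n$ and the factorization $1-\gamma^{-d_\pr}=(1-\gamma^{-1})(1+\gamma^{-1}+\cdots+\gamma^{-d_\pr+1})$. Your explicit remark that the splitting of $\infty$ in $H_A/F$ gives $I_\infty\subset Gal(F_1/H_A)$, so type 1 characters kill both places of $S_\chi$, is a point the paper leaves implicit, but the argument is the same.
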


\begin{proof} Just specialize the formulas of Theorem \ref{teoap2b} to the various cases. Note that $S=\{\pr,\infty\}$ so the 
$S_\chi=\{v \in S: \chi(\pi_H(I_v))=1\}$ of that theorem is empty if $\chi$ is of type 1, it contains only $\infty$ if $\chi$ is of type 2 
 and is equal to $S$ if $\chi$ is of type 3. Moreover since $I_\infty \iri H$ 
we have $S_\chi^{(1)}=\{\infty\}$ whenever $\chi$ is not of type 1. Finally $\pr$ plays the role of $v_1$, we already mentioned that
$\Fr_\infty=1$ and, since $\pr$ is totally ramified in $F_n/F_1$, we have $\Fr_{\pr,G_n}=1$ as well.
\end{proof}

\begin{rem}
For characters of type 1 or 2  the Fitting ideals of the Tate module and of its $W^*$ dual coincide (those are the only cases in which 
we can compute both). 
\end{rem}

\subsection{Fitting ideals of class groups}\label{SecFittClassGroups}
Let $C_n:=\calCl(F_n)\{p\}$ be the $p$-torsion of the class groups of degree zero divisors of $F_n$, which is a 
$\mathbb{Z}_p[G_n\times H]$-module.

It is well-known that $C_n$ can be obtained as the $G_\F$-coinvariants of the Tate module $T_p(F_n)$ (see, for example,
\cite[Lemma 4.6]{ABBL} or \cite[Lemma 2.4.1]{CPhD}), therefore thanks to the properties of Fitting ideals we can compute the Fitting
ideals of class groups (or of their duals) simply by specializing the previous formulas with $\gamma\mapsto 1$.

For the dual of the class groups as noted in \cite[Section3]{GP2} (see also \cite[Remark 4.7]{ABBL}), we know that 
\[ C_n^{\vee}:=Hom(C_n,\Q_p/\Z_p)\simeq T_p(F_n)^*/(1-\gamma^{-1})T_p(F_n)^*\] 
therefore we have
\[ \Fitt_{W[G_n]}(C_n(\chi)^{\vee})=\pi^{W[G_n][[G_\F]]}_{W[G_n]}(\Fitt_{W[G_n][[G_\F]]}(T_p(F_n)(\chi)^*)) .\]
We do not write down the class group case because we only have formulas for the characters of type 1 and 2 and they coincide with
the ones for the dual.

\begin{cor}\label{CorDualClGr}
With notations and hypotheses as above, $ \Fitt_{W[G_n]}(C_n(\chi)^{\vee})$ is equal to 
\[ \begin{array}{ll}
\left( \Theta_{F_n/F,\chi,S}(1)\right) & \chi\ of\ type\ 1 \\
\displaystyle{ \left( \frac{\Theta_{F_n/F,S,\chi}(\gamma^{-1})}{1-\gamma^{-1}}_{|\gamma=1} \right) } & \chi\ of\ type\ 2 \\
\displaystyle{ \frac{\Theta_{F_n/F,S,\chi}(\gamma^{-1})}{(1-\gamma^{-1})}_{|\gamma=1}\cdot\left(1,\frac{n(G_n)}{1-\chi(\Fr_{\pr,H})^{-1}}\right) } & 
\chi\ of\ type\ 3,\ \chi(\Fr_{\pr,H})\neq 1 \\
\displaystyle{ \frac{\Theta_{F_n/F,S,\chi}(\gamma^{-1})}{(1-\gamma^{-1})^2}_{|\gamma=1}\cdot
\left(\frac{n(G_n)}{d_\pr} \right) } & \chi\neq \chi_0\ of\ type\ 3,\ \chi(\Fr_{\pr,H})=1\\
\displaystyle{ \frac{\Theta_{F_n/F,S,\chi}(\gamma^{-1})}{1-\gamma^{-1}}_{|\gamma=1}\cdot\left(1,\frac{n(G_n)}{d_\pr}\right) }
& \chi=\chi_0
\end{array}\]
\end{cor}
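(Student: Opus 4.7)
The plan is to derive the five formulas by applying the projection $\pi^{G_\F}\colon W[G_n][[G_\F]]\to W[G_n]$ sending $\gamma\mapsto 1$ to each of the cases of Theorem \ref{teo3.3}. The justification comes from the isomorphism $C_n(\chi)^\vee\simeq T_p(F_n)(\chi)^*/(1-\gamma^{-1})T_p(F_n)(\chi)^*$ recalled just above the corollary, combined with the standard commutation of Fitting ideals with the surjection $W[G_n][[G_\F]]\twoheadrightarrow W[G_n]$, yielding
\[ \Fitt_{W[G_n]}(C_n(\chi)^\vee)=\pi^{G_\F}\bigl(\Fitt_{W[G_n][[G_\F]]}(T_p(F_n)(\chi)^*)\bigr). \]
After this the whole proof amounts to evaluating at $\gamma=1$.

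For type 1 the formula has no denominators, so I would simply send $\gamma^{-1}\mapsto 1$ in $\Theta_{F_n/F,S,\chi}(\gamma^{-1})$. For type 2 the quotient $\Theta_{F_n/F,S,\chi}(\gamma^{-1})/(1-\gamma^{-1})$ is a genuine element of $W[G_n][[G_\F]]$ by Theorem \ref{teo3.3} (a simple trivial zero arises from the Euler factor at $\infty$, since $\chi$ is trivial on $I_\infty$ and $\Fr_\infty=1$), and its image under $\pi^{G_\F}$ is by definition its value at $\gamma=1$. For type 3 with $\chi(\Fr_{\pr,H})\neq 1$, the extra denominator $1-\chi(\Fr_{\pr,H})^{-1}\gamma^{-d_\pr}$ specializes to the unit $1-\chi(\Fr_{\pr,H})^{-1}$ of $W$, so both generators of the ideal pass through $\pi^{G_\F}$ without trouble.

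The only delicate case will be type 3 with $\chi\neq\chi_0$ and $\chi(\Fr_{\pr,H})=1$. Writing $\Theta:=\Theta_{F_n/F,S,\chi}(\gamma^{-1})$ and using the identity $(1-\gamma^{-1})(1+\gamma^{-1}+\cdots+\gamma^{-d_\pr+1})=1-\gamma^{-d_\pr}$, the ideal of Theorem \ref{teo3.3} rewrites as
\[ \bigl(\Theta/(1-\gamma^{-1}),\ \Theta\,n(G_n)/((1-\gamma^{-1})(1-\gamma^{-d_\pr}))\bigr). \]
Theorem \ref{teo3.3} guarantees integrality of both generators, hence $\Theta$ has a double zero at $\gamma=1$; writing $\Theta=(1-\gamma^{-1})^2\eta$ with $\eta\in W[G_n][[G_\F]]$, the first generator equals $(1-\gamma^{-1})\eta$ and is killed by $\pi^{G_\F}$, while the second specializes to $\eta(1)\cdot n(G_n)/d_\pr$, i.e.\ to the claimed single generator $(\Theta/(1-\gamma^{-1})^2)_{|\gamma=1}\cdot n(G_n)/d_\pr$. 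The last case $\chi=\chi_0$ proceeds the same way, except that $\Theta$ has only a simple zero so the first generator survives (giving the ``$1$'' in the two-element ideal) and $1+\gamma^{-1}+\cdots+\gamma^{-d_\pr+1}$ still specializes to $d_\pr$.

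The main obstacle will be the bookkeeping in case 4: one must simultaneously use the double trivial zero of $\Theta$ to keep the expressions integral and notice that the factor $(1-\gamma^{-1})$ appearing in one of the two generators forces it to vanish under specialization, so that the ideal collapses to a single generator. All other cases reduce to routine substitution.
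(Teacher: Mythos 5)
Your overall route is exactly the paper's: the corollary is deduced from Theorem \ref{teo3.3} via the isomorphism $C_n(\chi)^\vee\simeq T_p(F_n)(\chi)^*/(1-\gamma^{-1})T_p(F_n)(\chi)^*$ and the base-change property of Fitting ideals under $\pi^{G_\F}:\gamma\mapsto 1$ (the paper gives no further argument), and your cases of type 1, type 2, type 3 with $\chi(\Fr_{\pr,H})\neq 1$, and $\chi_0$ are indeed routine substitution.

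The gap is in your treatment of the fourth case. From the integrality of the two generators $g_1=\Theta/(1-\gamma^{-1})$ and $g_2=\Theta\, n(G_n)/\bigl((1-\gamma^{-1})(1-\gamma^{-d_\pr})\bigr)$ you conclude that $\Theta=\Theta_{F_n/F,S,\chi}(\gamma^{-1})$ has a double zero, i.e.\ $\Theta=(1-\gamma^{-1})^2\eta$ with $\eta\in W[G_n][[G_\F]]$. This inference is not valid: for any $x\in W[G_n][[G_\F]]$ one has $x\, n(G_n)=\epsilon(x)\, n(G_n)$, where $\epsilon$ is the augmentation $W[G_n][[G_\F]]\to W[[G_\F]]$, so the integrality of $g_2$ only controls the augmentation component $\epsilon(\Theta)=\Theta_{F_1/F,S,\chi}(\gamma^{-1})$ (which is indeed divisible by $(1-\gamma^{-1})(1-\gamma^{-d_\pr})$); it says nothing about the other components. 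In fact the componentwise double zero fails whenever $G_n\neq 1$: for a nontrivial character $\psi$ of $G_n$ the character $\chi\psi$ is ramified at $\pr$ and unramified at $\infty$ with $\chi\psi(\Fr_\infty)=1$, so (up to inverting the character) $\psi(\Theta)(u)=L_S(u,\chi\psi)=(1-u)\,L(u,\chi\psi)$, and $L(1,\chi\psi)\neq 0$ because the zeros of $L(u,\chi\psi)$ have absolute value $q^{-1/2}$. Hence your element $\eta$ does not exist, and the key assertion that $\pi^{G_\F}(g_1)=0$ --- which is what makes the ideal collapse to the single displayed generator --- is not established (the $\psi\neq 1$ components of $\pi^{G_\F}(g_1)$ are the nonzero values $L(1,\chi\psi)$). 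What your computation does correctly give is the image of the second generator, since multiplication by $n(G_n)$ reduces everything to the augmentation component, where the double zero genuinely occurs, so that $\pi^{G_\F}(g_2)$ is the stated element $\bigl(\Theta/(1-\gamma^{-1})^2\bigr)_{|\gamma=1}\, n(G_n)/d_\pr$ (interpreted through $\epsilon(\Theta)=\Theta_{F_1/F,S,\chi}$); but a complete proof of the displayed equality must still account for $\pi^{G_\F}(g_1)$, and your double-zero argument cannot supply this step.
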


\subsection{Limits of Fitting ideals}\label{SecLimFitt}
For characters of type 1 and 2 (two of the three cases in which we have a principal Fitting ideal) it is possible to consider
the inverse limit of the $C_n$ with respect to the maps induced by the natural norm maps $N_{F_m/F_n} : C_m \rightarrow C_n$
(for any $m\geqslant n$) and compute the Fitting ideal of that limit as the inverse limit of the Fitting ideals of Corollary \ref{CorDualClGr}
(without taking duals of course). This provides an element in the Iwasawa algebra $\Lambda:=W[[Gal(\calf_\pr/F_1)]]$ which can be used as the
algebraic counterpart of a $\pr$-adic $L$-function in the Iwasawa Main Conjecture for this setting. Details for the case of the Carlitz module
(i.e. $F=\F(t)$\,) are in \cite{ABBL}, the generalization to the case presented here can be found in the third author PhD thesis \cite{CPhD}. 

Here we would like to deal briefly with limits for Fitting ideals computed for characters of type 3: a motivation comes from the usual
application to Iwasawa theory, unfortunately, since we can only work with duals, it is not immediately clear what kind of arithmetic
meaning can be  associated with these limits (i.e. which is the Iwasawa module they are related to).

Directly from the definition of Stickelberger series we have 
\[ \pi^{n+m}_n(\Theta_{F_{n+m}/F,S,\chi}(u))=\Theta_{F_n/F,S,\chi}(u) .\]
So they are compatible with respect to the projection maps defining the Iwasawa algebra $\Lambda$ as an inverse limit of the 
group rings $W[G_n]$. Thus we can define
\begin{equation}\label{EqStickSer} 
\Theta_{\calf_\pr/F,S,\chi}(u):=\plim{n} \Theta_{F_n/F,S,\chi}(u) \in \Lambda[[u]] ,
\end{equation}
and its specialization
\begin{equation}\label{EqStickEl} 
\Theta_{\calf_\pr/F,S,\chi}(\gamma^{-1}):=\plim{n} \Theta_{F_n/F,S,\chi}(\gamma^{-1}) \in \Lambda[[G_\F]] .
\end{equation}

Regarding the other element appearing as a generator of the Fitting ideals we have
\[ \pi^{n+m}_n(n(G_{n+m}))=[F_{n+m}:F_n]n(G_n) .\]
Hence for any $m\geqslant 0$
\[ \pi^{n+m}_n(\Fitt_{W[G_{n+m}]}(C_{n+m}(\chi)^{\vee})) \subseteq \Fitt_{W[G_n]}(C_n(\chi)^{\vee}) \]
for any $\chi$ of type 3 (note that we actually have equality for characters of type 1 and 2) and this is enough to have a compatible
inverse system of ideals and to define their inverse limit inside $\Lambda[[G_\F]]$ (a similar procedure has been used in \cite{BBL1} and 
\cite{BBL2} for characteristic ideals).

\begin{defin}\label{DefProFitt}
With the above notations we define the {\em pro-Fitting ideal} of the dual of the $\chi$-part of the class groups as
\[ \widetilde{\Fitt}_\Lambda(\calc^\vee(\chi)):=\plim{n} \Fitt_{W[G_n]}(C_n(\chi)^{\vee}).\]
\end{defin}

\begin{rem}\label{RemLimClGroups}
For characters of type 1 and 2 we have 
\[ \widetilde{\Fitt}_\Lambda(\calc^\vee(\chi)) = \plim{n} \Fitt_{W[G_n]}(C_n(\chi)) \]
as well. Moreover, thanks to the presence of a totally ramified prime, one can also show that 
\[ \plim{n} \Fitt_{W[G_n]}(C_n(\chi)) = \Fitt_\Lambda(\plim{n} C_n(\chi)) \]
(see \cite[Sections 4.4 and 5]{ABBL} or \cite[Sections 2.3 and 2.4]{CPhD}). As mentioned above, this provides a link between the
Fitting ideal of the inverse limit of the class groups of subextensions of $\calf_\pr$ (i.e. the {\em class group} of $\calf_\pr$)
and a Stickelberger element. Such formula is one of the incarnations of Iwasawa Main Conjecture for the function field setting,
its relation with more analytic objects (like Goss $\zeta$-function) will be briefly explained in the next section.
\end{rem}

\begin{cor}\label{CorProFitt}
For characters of type 3 one has
\[ \widetilde{\Fitt}_\Lambda(\calc^\vee(\chi)) = \left\{ \begin{array}{ll} 
\displaystyle{ \left( \frac{\Theta_{\calf_\pr/F,S,\chi}(\gamma^{-1})}{(1-\gamma^{-1})}_{|\gamma=1} \right)}& 
if\ \chi(\Fr_{\pr,H})\neq 1 \\
(0) & if\ \chi\neq \chi_0,\ \chi(\Fr_{\pr,H})=1\\
\displaystyle{ \left( \frac{\Theta_{\calf_\pr/F,S,\chi}(\gamma^{-1})}{1-\gamma^{-1}}_{|\gamma=1} \right)} & if\ \chi=\chi_0
\end{array} \right. .\]
\end{cor}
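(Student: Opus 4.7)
The strategy is to specialize the level-$n$ formulas of Corollary \ref{CorDualClGr} in each of the three type 3 sub-cases and then pass to the inverse limit along the transition maps $\pi^{n+m}_n$. The Stickelberger series are tautologically compatible by \eqref{EqStickSer}; the ``extra'' generators involving $n(G_n)$ behave in three qualitatively different ways, which is what produces the three different answers.

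\textbf{Cases $\chi(\Fr_{\pr,H})\neq 1$ and $\chi=\chi_0$.} When $\chi(\Fr_{\pr,H})\neq 1$ the scalar $1-\chi(\Fr_{\pr,H})^{-1}$ is a unit of $W$: indeed $|H|=(q^{d_\pr}-1)h^0(F)$ is prime to $p$, so $\chi(\Fr_{\pr,H})$ is a non-trivial root of unity of order prime to $p$ and $1-\zeta\in W^\times$ for each such $\zeta$. Hence the generator $n(G_n)/(1-\chi(\Fr_{\pr,H})^{-1})$ is merely a unit multiple of $n(G_n)$, so $(1,\ldots)=(1)$ and the level-$n$ Fitting ideal reduces to the principal ideal generated by $c_n := (\Theta_{F_n/F,S,\chi}(\g^{-1})/(1-\g^{-1}))_{|\g=1}$. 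By \eqref{EqStickSer} the $c_n$ form a compatible system, so the inverse limit of these principal ideals is the principal ideal of $\Lambda$ generated by $\Theta_{\calf_\pr/F,S,\chi}(\g^{-1})/(1-\g^{-1})_{|\g=1}$. The case $\chi=\chi_0$ is entirely analogous since $(1, n(G_n)/d_\pr)=(1)$.

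\textbf{Substantive case: $\chi\neq\chi_0$ of type 3 with $\chi(\Fr_{\pr,H})=1$.} Corollary \ref{CorDualClGr} presents $I_n := \Fitt_{W[G_n]}(C_n(\chi)^\vee)$ as the principal ideal generated by $a_n := (\Theta_{F_n/F,S,\chi}(\g^{-1})/(1-\g^{-1})^2)_{|\g=1}\cdot n(G_n)/d_\pr$. The crucial observation is that $a_n$ lies in the rank-one $W$-submodule $W\cdot n(G_n)\subseteq W[G_n]$: it is literally a $W$-scalar multiple of $n(G_n)$, and since $n(G_n)\cdot r=\epsilon(r)\,n(G_n)$ for every $r\in W[G_n]$ (with $\epsilon$ the augmentation), we get $I_n\subseteq W\cdot n(G_n)$. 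Now the projections satisfy
\[ \pi^{n+m}_n\bigl(n(G_{n+m})\bigr) = [F_{n+m}:F_n]\,n(G_n) = p^{v(m)}\,n(G_n), \qquad v(m)\to\infty, \]
because $G_{n+m}/G_n$ is a $p$-group whose order diverges with $m$. Thus any compatible system $(x_n)\in\plim{n} I_n$ satisfies, for every $m\ge 0$,
\[ x_n = \pi^{n+m}_n(x_{n+m}) \in \pi^{n+m}_n(W\cdot n(G_{n+m})) = p^{v(m)}\,W\cdot n(G_n). \]
As $W$ is $p$-adically separated (being a finite extension of $\Z_p$) and $n(G_n)$ is $W$-torsion free, we conclude $x_n\in\bigcap_m p^{v(m)}\,W\cdot n(G_n)=0$ for every $n$, so $\widetilde{\Fitt}_\Lambda(\calc^\vee(\chi))=(0)$.

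\textbf{Main obstacle.} The most delicate technical point is the identification of $a_n$ as a bona fide element of $W\cdot n(G_n)$ in the substantive case, especially when $p\mid d_\pr$, where the symbol $n(G_n)/d_\pr$ is not immediately meaningful inside $W[G_n]$. One should work instead at the $W[G_n][[G_\F]]$-level with the denominator $1+\g^{-1}+\cdots+\g^{-d_\pr+1}$ appearing in Theorem \ref{teo3.3} and verify that the combined expression specializes at $\g=1$ to a genuine $W$-multiple of $n(G_n)$. Once this is in hand, the vanishing of the pro-Fitting ideal follows cleanly because the norm elements $n(G_n)$ are the most inhospitable possible ones for compatibility under the $p$-group projections $\pi^{n+m}_n$, which is exactly the algebraic shadow of the ``double trivial zero'' disappearing upon passage to the limit.
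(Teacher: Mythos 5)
Your proposal is correct and follows essentially the same route as the paper, whose proof is precisely the compressed version of your argument: specialize Corollary \ref{CorDualClGr}, use the compatibility of the Stickelberger elements, and note that $\plim{n}\, n(G_n)=0$ because $\pi^{n+m}_n(n(G_{n+m}))=[F_{n+m}:F_n]\,n(G_n)$ with $p$-power index tending to infinity. The only soft spot is your shortcut $(1,n(G_n)/d_\pr)=(1)$ in the $\chi=\chi_0$ case: when $p\mid d_\pr$ the element $n(G_n)/d_\pr$ need not lie in $W[G_n]$ and the level-$n$ ideal need not be principal, but the extra generator is again a $W$-multiple of $n(G_n)$ and is killed in the limit by exactly the projection argument you give in the middle case, so the stated formula is unaffected.
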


\begin{proof}
Just use limits on the ideals of Corollary \ref{CorDualClGr} and note that $\plim{n} n(G_n)=0$.
\end{proof}

\begin{rem}\label{RemLimOnCores}
Via the maps
\[ \lambda_n : W[G_n]^\vee \rightarrow W[G_n] \]
\[ \lambda_n(\varphi):=\sum_{g\in G_n} \varphi(g) \]
one can obtain a self duality for Iwasawa rings (and for their limit, i.e. for the Iwasawa algebra). Moreover
such maps provide commutative diagrams with projections and corestrictions (see, e.g., \cite[Appendix A]{Kid}).
It might be interesting to study direct limits as well for our Fitting ideals with respect to corestriction maps.
We can immediately remark that $cor^n_{n+1}(n(G_n))=n(G_{n+1})$, the corestriction map on Stickelberger elements is natural and 
already appeared in Theorem \ref{teoap4} and, between duals of class groups, the natural maps induced by norms go
in the direction of a direct limit (i.e.  $(N^{n+1}_n)^\vee: C_n(\chi)^\vee \rightarrow C_{n+1}(\chi)^\vee$).
One has to check compatibility of all maps involved and figure out the behavior of Fitting ideals with respect to such a limit,
i.e. does computation of Fitting ideals commute with the direct limit in this setting (as it does with inverse limit as mentioned in 
Remark \ref{RemLimClGroups}) ? The main reason for not dealing with these issues here is that it is not clear what kind of arithmetic 
information (if any) one can obtain from such a procedure, but the appearance of Stickelberger elements (with their several relations
with $L$-functions in general) could be a motivation for investigating these direct limits in some future work.
\end{rem}

\end{section}

\begin{section}{Interpolation of Goss $\zeta$-function via Stckelberger elements}\label{SecGoss}
In this final section we would like to briefly explain how Stickelberger element (and Stickelberger series in general) can be used to interpolate
the key analytic object for the function field setting, i.e. the {\em Goss $\zeta$-function} whose central role in the theory is just one among the many
key contributions David Goss provided to the subject.

Here we still deal with the cyclotomic extension $\calf_\pr:=\cup_{n\geqslant 1} F_n$.
We start by recalling the definition of the Goss $\zeta$-function in our (slightly simplified) setting where we assume that $p$ does not divide
the order of the class group of $F$ (details for a general function field are provided in \cite[Chapter 8]{Goss}). 
Let $F_\infty$ denote the completion of $F$ at the  prime at infinity and let $\C_{\infty}$ be the completion of 
a fixed algebraic closure of $F_\infty$. We fix a sign function $\text{sgn}: F_\infty^*\rightarrow\F^*$ and a positive uniformizer 
$\pi_\infty\in F_\infty$, i.e. with $\text{sgn}(\pi_\infty) = 1$. The 1-units of $F_\infty$ will be denoted by $U^1_\infty$.
Since the sign function has image in $\F^*$ (which has order prime with $p$), our hypothesis on the class group implies that
the group $\mathcal{I}_F/\mathcal{P}^+$ (integral ideals of $F$ modulo principal ideals generated by a positive element) has order prime with
$p$ as well. Let $h:=|\mathcal{I}_F/\mathcal{P}^+|$, then, for any ideal $\mathfrak{a}$ of $A$, one has $\mathfrak{a}^h=(\alpha)$ for some 
positive $\alpha\in A$. Denote by $\langle \alpha \rangle_\infty\in U^1_\infty$ the 1-unit associated to $\alpha$ and define
\[ \langle \mathfrak{a}\rangle :=\langle \alpha\rangle_\infty^{1/h} \in U^1_\infty \]
as the {\em unique} 1-unit whose $h$-power is $\langle\alpha\rangle_\infty$ (it is still in $U^1_\infty$ because of our hypothesis on $h$ 
and Hensel's Lemma). We drop the index $\infty$ because in this section we only deal with interpolation at the prime at infinity, for interpolation
at different primes see \cite[Section 3.3]{ABBL} and \cite[Section 1.8]{CPhD}. Note that the hypothesis $d_\infty=1$ makes this definition
independent from the choice of the uniformizer $\pi_\infty$ (and, obviously, also from the choice of one of its $d_\infty$-th roots).

\begin{defin}\label{DefGossZeta}
For $s=(x,y)\in\bbS_\infty:=\C_{\infty}^*\times\Z_p$ define the exponential of an ideal $\mathfrak{a}$ by
\[ \mathfrak{a}^s := x^{\text{deg}\,\mathfrak{a}}\langle\mathfrak{a}\rangle^y .\]
The {\em Goss $\zeta$-function} for $F$ is
\[ \zeta_A(s) := \sum_{\mathfrak{a}\neq 0} \mathfrak{a}^{-s}\,,\]
where the sum runs through the set of all the non-zero integral ideals of $A$.
\end{defin}

\noindent One of the more relevant properties of Goss $\zeta$-function (which trivially converges for $|x|_\infty > 1$) is that it can be extended analytically 
to the whole space $\bbS_\infty$ by using some inequalities arising from the Riemann-Roch Theorem (see \cite[Section 8.9]{Goss}).

Let $G_S:=Gal(F_S/F)$ be the Galois group of the maximal abelian extension of $F$ unramified outside $S$, then $Gal(\calf_\pr/F)$ is
a quotient of $G_S$. Let $\mathcal{W}_S$ be the subgroup of $G_S$ generated by all the Frobenius $\Fr_v$ ($v\not\in S$) and let
$M_S$ be the fixed field of the topological closure of $\mathcal{W}_S$. Since all primes not in $S$ split completely in $M_S$,
Chebotarev density theorem yields $M_S=F$ and $G_S=\ov{M_S}$. Therefore to define a continuous character on $G_S$ (and, a fortiori, on 
$Gal(\calf_\pr/F)$\,) it suffices to describe its values on $\Fr_v$ for all $v\not\in S$.  

For any $y\in \Z_p$, consider the $\C^*_\infty$-valued character 
\[  \Psi_y : Gal(\calf_\pr/F) \rightarrow \C^*_\infty \]
\[ \Psi_y(\Fr_{\mathfrak{q}}):=\langle \mathfrak{q} \rangle^{-y} \]
and extend it in a natural way to a map $\Lambda[[u]] \rightarrow \C_\infty[[u]]$ (still denoted by $\Psi_y$ by a little abuse of notations).

\begin{thm}\label{ThmStickGoss}
For any $s=(x,y)\in \bbS_\infty$,
\[ \Psi_y(\Theta_{\calf_\pr/F,S})(x)=(1-\pr^s)\zeta_A(-s)\,.\]
\end{thm}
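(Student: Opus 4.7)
The plan is to unfold both sides as Euler products over primes of $F$ and match them factor by factor, first in a region of convergence and then extending by analytic continuation to all of $\bbS_\infty$.

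First I would write the Stickelberger series as an Euler product over primes $\gotq\notin S$. By Definition \ref{DefStick} (applied at each finite layer $F_n/F$) and the compatibility of the pro-element defined in \eqref{EqStickSer} with the projections $\pi^{n+m}_n$, one has
\[ \Theta_{\calf_\pr/F,S}(u)=\prod_{\gotq\notin S}(1-\Fr_\gotq^{-1}u^{\deg(\gotq)})^{-1} \]
as an element of the appropriate power-series Iwasawa algebra over $Gal(\calf_\pr/F)$. Since $\Psi_y$ is continuous and multiplicative (it is defined on Frobenius elements, and $Gal(\calf_\pr/F)$ is the topological closure of the group generated by such Frobenius by the Chebotarev argument recalled before Theorem \ref{ThmStickGoss}), it extends term by term to the product. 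Using $\Psi_y(\Fr_\gotq^{-1})=\langle\gotq\rangle^{y}$ and specializing $u=x$, I obtain
\[ \Psi_y(\Theta_{\calf_\pr/F,S})(x)=\prod_{\gotq\notin S}(1-\langle\gotq\rangle^{y}x^{\deg(\gotq)})^{-1}. \]

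Next I would rewrite each Euler factor in terms of the ideal exponential of Definition \ref{DefGossZeta}: for a finite prime $\gotq$ and $s=(x,y)$, one has $\gotq^{s}=x^{\deg(\gotq)}\langle\gotq\rangle^{y}$, so the product above equals $\prod_{\gotq\notin S}(1-\gotq^{s})^{-1}$. On the other side, for $|x|_\infty$ large enough so that the defining series for $\zeta_A$ converges absolutely, unique factorization of ideals in $A$ (which is a Dedekind domain) gives the classical Euler product
\[ \zeta_A(-s)=\sum_{\gota\neq 0}\gota^{s}=\prod_{\gotq\neq\infty}(1-\gotq^{s})^{-1}. \]
Since $S=\{\pr,\infty\}$, the only factor missing from the Stickelberger side is the one at $\pr$, hence
\[ \Psi_y(\Theta_{\calf_\pr/F,S})(x)=(1-\pr^{s})\prod_{\gotq\neq\infty}(1-\gotq^{s})^{-1}=(1-\pr^{s})\zeta_A(-s). \]

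Finally, I would extend the identity from the convergence region to all of $\bbS_\infty$ by invoking the analytic continuation of $\zeta_A$ (from Riemann--Roch, as noted after Definition \ref{DefGossZeta}) together with the entireness of the left-hand side in $x$, which follows once one views $\Psi_y(\Theta_{\calf_\pr/F,S})(u)$ as a power series with suitable $p$-adic estimates on the coefficients coming from the pro-structure in $\Lambda$.

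The main obstacle I anticipate is justifying the termwise application of $\Psi_y$ to the infinite Euler product: this requires a careful interpretation of $\Theta_{\calf_\pr/F,S}$ in a large enough Iwasawa algebra on which $\Psi_y$ is continuous, together with convergence control on the factors $1-\Fr_\gotq^{-1}u^{\deg(\gotq)}$ for $\gotq\notin S$. This is essentially the setup developed in \cite[Section 3]{ABBL}, and once it is imported the rest of the proof is a clean manipulation of Euler products.
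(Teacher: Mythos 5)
Your proposal is correct and follows essentially the same route as the paper: apply $\Psi_y$ to the Euler product defining the Stickelberger series, rewrite each factor $1-\langle\gotq\rangle^{y}x^{\deg(\gotq)}$ as $1-\gotq^{s}$, and compare with the Euler product \eqref{EqEuProdZeta} for $\zeta_A(-s)$, the discrepancy being precisely the missing factor at $\pr$. The paper treats the identity at the level of formal Euler products, so the extra analytic-continuation and convergence scaffolding you add is harmless but not part of its argument.
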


\begin{proof}
Directly from Definition \ref{DefGossZeta} one can see that there exists an Euler product formula for $\zeta_A$ which reads as
\begin{equation}\label{EqEuProdZeta}
\zeta_A(s)=\prod_{\mathfrak{q}\neq \infty} (1-\mathfrak{q}^{-s})^{-1}=
\prod_{\mathfrak{q}\neq \infty} \left(1-\langle \mathfrak{q}\rangle^{-y}x^{-\deg(\mathfrak{q})}\right)^{-1} 
\end{equation}
(because $-s=(x^{-1},-y)$\,).

\noindent Now from Definition \ref{DefStick} and equation \eqref{EqStickSer} we have 
\begin{equation}\label{EqCharStick}
\begin{array}{ll} \Psi_y(\Theta_{\calf_\pr/F,S})(u) & =\displaystyle{ \prod_{\mathfrak{q}\not\in S} 
\left( 1-\Psi_y(\Fr_\mathfrak{q}^{-1})u^{\deg(\mathfrak{q})}\right)^{-1} }\\
\ & = \displaystyle{ \prod_{\mathfrak{q}\not\in S} \left( 1-\langle \mathfrak{q}\rangle^y u^{\deg(\mathfrak{q})}\right)^{-1}} .
\end{array} 
\end{equation}
Specializing $u\mapsto x$ (and comparing with \eqref{EqEuProdZeta}) we obtain 
\[ \begin{array}{ll} \Psi_y(\Theta_{\calf_\pr/F,S})(x) & = 
\displaystyle{ \prod_{\mathfrak{q}\not\in S} \left( 1-\langle \mathfrak{q}\rangle^y x^{\deg(\mathfrak{q})}\right)^{-1}} \\
\ & = \displaystyle{ \prod_{\mathfrak{q}\not\in S} \left( 1- \mathfrak{q}^s\right)^{-1}} \\
\ & = (1-\pr^s) \displaystyle{ \prod_{\mathfrak{q}\neq \infty} \left( 1- \mathfrak{q}^s\right)^{-1}} = (1-\pr^s)\zeta_A(-s). \qedhere
\end{array}  \]
\end{proof}

\begin{rem}
The character $\Psi_y$ can actually be interpreted as a character on ideles via the reciprocity map (composed with some projection map)
$rec_{\calf_\pr}:\mathbb{I}_F/F^* \twoheadrightarrow Gal(\calf_\pr/F)$. This interpretation has been introduced in \cite{ABBL} for $\F(t)$ (and 
generalized in \cite{CPhD}) and exploited not only for characters with values in $\C_\infty$ but also for those with values in 
$\C_{\mathfrak{q}}$ for any prime $\mathfrak{q}$, leading to similar interpolation formulas for $\mathfrak{q}$-adic $L$-functions. 
\end{rem}

\end{section}

\end{document}